\documentclass[a4paper]{amsart}

\usepackage{amsthm, amssymb, amsmath, amsfonts}
\usepackage{url}

\newtheorem{thm}{Theorem}[section]
\newtheorem{prop}[thm]{Proposition}

\renewcommand{\le}{\leqslant}
\renewcommand{\ge}{\geqslant}

\newcommand{\E}{\mathbb{E}}
\newcommand{\EE}{\mathbf{E}}
\newcommand{\cE}{\mathcal{E}}
\newcommand{\N}{\mathbb{N}}
\renewcommand{\L}{\mathcal{L}}
\newcommand{\1}{\mathbf{1}}
\newcommand{\R}{\mathbb{R}}
\newcommand{\Z}{\mathbb{Z}}
\renewcommand{\P}{\mathbb{P}}
\newcommand{\PP}{\mathbf{P}}
\newcommand{\ov}{\overline}
\newcommand{\td}{\tilde}
\newcommand{\eps}{\varepsilon}
\def\d{{\mathrm{d}}}

\title{Principal eigenvalue for random walk among random traps on $\Z^d$}
\author{Jean-Christophe Mourrat}

\address{Universit\'e de Provence, CMI, 39 rue Joliot Curie, 13013 Marseille, France ; PUC de Chile, Facultad de Matem\'aticas, Vicu\~na Mackenna 4860, Macul, Santiago, Chile.
}

\begin{document}

\begin{abstract}
Let $(\tau_x)_{x \in \Z^d}$ be i.i.d. random variables with heavy (polynomial) tails. Given $a \in [0,1]$, we consider the Markov process defined by the jump rates $\omega_{x \to y} = {\tau_x}^{-(1-a)} {\tau_y}^a$ between two neighbours $x$ and $y$ in $\Z^d$. We give the asymptotic behaviour of the principal eigenvalue of the generator of this process, with Dirichlet boundary condition. The prominent feature is a phase transition that occurs at some threshold depending on the dimension. 
\end{abstract}

\maketitle

\section{Introduction}
For each site $x \in \Z^d$, let $\tau_x > 0$ be a random variable, so that $(\tau_x)_{x \in \Z^d}$ are independent and identically distributed. We call $\tau = (\tau_x)_{x \in \Z^d}$ the \emph{environment}, and write its law $\P$ (and the corresponding expectation $\E$).
Fixing $a \in [0,1]$ and an environment $\tau$, we define the Markov process $(X_t)_{t \ge 0}$ by the following jump rates~:
$$
\omega_{x \rightarrow y} = \left|
\begin{array}{ll}
{\tau_x}^{-(1-a)} {\tau_y}^a & \text{if } \|x-y\|=1 ,\\
0 & \text{otherwise}.
\end{array}
\right.
$$
The associated infinitesimal generator is~:
$$
\L f(x) = \sum_{y : \|x-y\|=1} \omega_{x \rightarrow y} (f(y)-f(x)).
$$
The aim of this note is to investigate the behaviour of the principal eigenvalue of $\L$ restricted to a large box. Define the box of size $n$ by $B_n = \{-n, \ldots, n\}^d$, and $\L_n$ the operator $\L$ restricted to this box, with Dirichlet boundary conditions. That is to say $\L_n f = \1_{B_n} \L f$, defined for any function $f : \Z^d \to \R$ that vanishes outside the box. Let $\lambda_n$ be the smallest eigenvalue of $-\L_n$. We write $\lambda_n^\circ$ for the eigenvalue obtained in the particular case when $a=0$.

We are particularly interested in the study of heavy tailed laws for the environment. A natural assumption (see the remark just after Theorem \ref{main}) is that the tail $\P[\tau_0 > y]$ behaves like a power of $y$ as $y$ goes to infinity. 

\noindent \textbf{Assumption 1.} There exists $\alpha > 0$ such that~:
\begin{equation}
\label{assump1}
F(y) := \P[\tau_0 > y] \simeq \frac{1}{y^\alpha} \qquad (y \rightarrow + \infty).
\end{equation}

More precisely, we say that a function $f$ varies regularly with index $\rho$ at infinity, and write $f \in RV_\rho$, if for all $\kappa > 0$, $f(\kappa x)/f(x) \to \kappa^\rho$ as $x \to + \infty$ (see \cite{reg} for a monograph on regular variation). 

\noindent \textbf{Assumption 1'.} There exists $\alpha > 0$ such that $F \in RV_{-\alpha}$.

Assumption 1' gives a precise sense to assumption 1, and is more general than just assuming the equality (or equivalence) in equation (\ref{assump1}). Note that, for $0 < \alpha < 2$, $\tau_0$ belongs to the domain of attraction of an $\alpha$-stable law if and only if $F \in RV_{-\alpha}$ (see \cite[Corollary XVII.5.2]{fel}). 
Assumption 1' implies that for any $\eps > 0$~:
\begin{equation}
\label{ppF}
F(y) y^{\alpha+\eps} \xrightarrow[y \to +\infty]{} + \infty \quad \text{and} \quad F(y) y^{\alpha-\eps} \xrightarrow[y \to +\infty]{} 0,
\end{equation}
and as a consequence, $\E[\tau_0^\beta]$ is finite for all $\beta < \alpha$, infinite for all $\beta > \alpha$ (and may be finite or infinite when $\beta = \alpha$).

\noindent {\bf Assumption 2.} We will always assume that $\tau_0 \ge 1$, concentrating on ``bad behaviours'' at infinity. 
 
We need to introduce the generalized inverse of $1/F$, defined by~:
$$
h(x) = \inf \{y : 1/F(y) \ge x\}.
$$
As $F$ belongs to $RV_{-\alpha}$, one can see that $h \in RV_{1/\alpha}$ (see for instance \cite[Proposition 0.8 (v)]{res}). Loosely speaking, $h(y) \simeq y^{1/\alpha}$. We will recall later how $h$ is related to the asymptotic behaviour of maxima and sums of $(\tau_x)$ (see Proposition \ref{hbehav}), but let us first state (and comment) our main results.


\begin{thm}
\label{main}
\begin{enumerate}
\item
\label{main:psa}
For almost every environment, we have~:
$$
\lim_{n \to \infty} - \frac{\ln(\lambda_n)}{\ln(n)} = 
\left|
\begin{array}{ll}
\max\left(2,1+\dfrac{1}{\alpha}\right) & \text{if } d = 1 ,\\
\max\left(2,\dfrac{d}{\alpha}\right) & \text{if } d \ge 2.
\end{array}
\right.
$$
\item
\label{main:psr}
If $d \ge 2$ and $\alpha > d/2$ or if $d=1$ and $\alpha > 1$, then there exist $k_1,k_2 > 0$ such that for almost every environment and $n$ large enough~:
$$
\frac{k_1}{n^2} \le \lambda_n \le \frac{k_2}{n^2}.
$$
\item
\label{main:dn2}
If $\alpha < 1$ and $d \neq 2$, then for any $\eps > 0$, there exist $\eta, M > 0$ such that for all $n$ large enough~:
$$
\P[\eta \le a_n \lambda_n \le M] \ge 1 - \eps,
$$
where
$$
a_n =
\left|
\begin{array}{ll}
n h(n) & \text{if } d=1 ,\\
h(n^d) & \text{if } d \ge 3.
\end{array}
\right.
$$
\item
\label{main:d2}
Let $a_n = \ln(n) h(n^2)$. If $d=2$ and $\alpha < 1$, then for any $\eps > 0$, there exist $\eta, M > 0$ such that for all $n$ large enough~:
$$
\P[\eta \le a_n \lambda_n^\circ \le M] \ge 1 - \eps,
$$
$$
\P[\eta \le a_n \lambda_n \le \ln(n) M] \ge 1 - \eps.
$$
\end{enumerate}
\end{thm}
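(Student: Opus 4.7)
The starting observation is that, for every $a \in [0,1]$, the measure $\mu_x := \tau_x$ is reversible for $\L$, since $\mu_x\,\omega_{x\to y} = (\tau_x\tau_y)^a$ is symmetric in $(x,y)$. Consequently $-\L_n$ is self-adjoint on $L^2(B_n,\mu)$, and
\[
\lambda_n \;=\; \inf_{\substack{f\not\equiv 0\\ f|_{B_n^c}\equiv 0}}\frac{\cE(f,f)}{\sum_x \tau_x\, f(x)^2}, \qquad \cE(f,f) \;=\; \tfrac12\sum_{\substack{x,y\\\|x-y\|=1}}(\tau_x\tau_y)^a\,(f(y)-f(x))^2.
\]
Because $\tau_x \ge 1$, the edge weights $(\tau_x\tau_y)^a$ are nondecreasing in $a$, so $a\mapsto \lambda_n$ is nondecreasing; in particular $\lambda_n \ge \lambda_n^\circ$, which reduces every lower bound to the case $a=0$. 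The two crucial environmental inputs, both with probability tending to $1$, are the extreme-value estimate $\tau^* := \max_{B_n}\tau_x \asymp h(n^d)$ and, when $\alpha<1$, the stable-law estimate $\sum_{x\in B_n}\tau_x \asymp h(n^d)$ (Proposition \ref{hbehav}).

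For the upper bounds in parts (\ref{main:dn2})--(\ref{main:d2}), let $x^*$ achieve $\tau^*$ and let $f$ be the equilibrium potential of $\{x^*\}$ relative to $B_n^c$ in the weighted network. Then $\cE(f,f) = \mathrm{cap}(x^*\to\partial)$ and $\sum_x \tau_x f(x)^2 \ge \tau^*$, so $\lambda_n \le \mathrm{cap}(x^*\to\partial)/\tau^*$. A resistor-network computation in the weighted graph (using that the neighbours of $x^*$ have typical $\tau$'s, so the weighted capacity is comparable to that of simple random walk) gives $\mathrm{cap}(x^*\to\partial) \lesssim 1/n$ in $d=1$ and $\lesssim 1$ in $d\ge 3$; combined with $\tau^* \asymp h(n^d)$ this yields $\lambda_n \lesssim 1/a_n$. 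In $d=2$, the SRW capacity is $\asymp 1/\ln n$, which is sharp for $a=0$; for $a>0$, heavy-tailed edge weights may inflate the effective capacity by up to one factor of $\ln n$, which is the source of the weaker upper bound on $\lambda_n$ in part (\ref{main:d2}).

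For the matching lower bounds, the monotonicity in $a$ reduces the work to $a=0$. The bound $\sum_x\tau_x f(x)^2 \le \|f\|_\infty^2\sum_{B_n}\tau_x$, together with the variational identity $f(x_0)^2 \le G_n(x_0,x_0)\,\cE_\circ(f,f)$ (where $G_n$ is the SRW Green's function in $B_n$ with Dirichlet boundary; the identity follows from $f(x_0) = \cE_\circ(f, G_n(\cdot, x_0))$ and Cauchy--Schwarz), gives
\[
\sum_x \tau_x\, f(x)^2 \;\le\; \Big(\max_{x_0\in B_n} G_n(x_0,x_0)\Big) \cdot \Big(\sum_{B_n}\tau_x\Big) \cdot \cE_\circ(f,f).
\]
Inserting $G_n(x,x) \asymp 1,\ln n, n$ for $d\ge 3,\,2,\,1$ and $\sum\tau \asymp h(n^d)$ yields $\lambda_n^\circ \ge \eta/a_n$, completing (\ref{main:dn2})--(\ref{main:d2}).

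The same machinery proves part (\ref{main:psr}): the sup-bound is upgraded to a discrete Sobolev inequality, so that in $d\ge 3$, H\"older and Sobolev give $\sum\tau f^2 \le (\sum\tau^{d/2})^{2/d}\|f\|_{L^{2d/(d-2)}}^2 \le C(\sum\tau^{d/2})^{2/d}\cE_\circ(f,f)$; the hypothesis $\alpha>d/2$ then ensures $\sum\tau^{d/2} = O(n^d)$ by the strong law of large numbers, yielding $\lambda_n^\circ \gtrsim 1/n^2$ (the cases $d=1,2$ under $\alpha>1$ are analogous). The upper bound $\lambda_n \le k_2/n^2$ comes from testing against a Dirichlet eigenfunction of the ordinary Laplacian (e.g.\ $\prod_i\cos(\pi x_i/(2n+2))$), whose Rayleigh quotient is $O(1/n^2)$ by an LLN on $\sum(\tau_x\tau_y)^a$ and $\sum\tau_x$ valid in the same moment range. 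Part (\ref{main:psa}) is then a direct consequence: the diffusive mechanism ($1/n^2$) and the trap mechanism ($1/a_n$) produce log-exponents $2$ and $d/\alpha$ (respectively $1+1/\alpha$ in $d=1$), and the slower one prevails, producing the ``max''. The main technical obstacle is the $d=2$, $a>0$ upper bound, where one must understand how heavy-tailed conductances (themselves heavy-tailed once $a\ge\alpha$) deform the logarithmic harmonic profile around $x^*$; the unbridged $\ln n$ factor between $\lambda_n^\circ$ and $\lambda_n$ in the statement of (\ref{main:d2}) reflects exactly this difficulty.
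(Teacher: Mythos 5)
Your architecture for parts (\ref{main:dn2})--(\ref{main:d2}) -- lower bounds from $f(x_0)^2\le C\,G_n(x_0,x_0)\,\cE_0(f,f)$ combined with $\sum_{B_n}\tau_x$, upper bounds from a test function localized at the deepest trap -- is essentially the paper's (its quantity $C_n$ is the effective conductance, i.e. up to constants the reciprocal of your Green diagonal, and its one-dimensional and ``indicator of $x^*$ and its neighbours'' test functions play the role of your equilibrium potential). Your H\"older--Sobolev step $\sum_x\tau_xf(x)^2\le(\sum_x\tau_x^{d/2})^{2/d}\|f\|_{2d/(d-2)}^2\le C(\sum_x\tau_x^{d/2})^{2/d}\cE_0(f,f)$ is, for $d\ge3$, a genuinely different and much shorter route to the difficult lower bounds than the paper's Section \ref{s:Green} (exit times via $\EE_x^\tau[T_n]=\sum_yG_n(x,y)\tau_y$, truncation at $n^{d/\alpha'}$, high moments, Borel--Cantelli, and Proposition \ref{sortie}). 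But there are genuine gaps. The most serious is part (\ref{main:psr}) for $d=2$, $\alpha>1$: your claim that ``the cases $d=1,2$ are analogous'' fails, because there is no scale-invariant Sobolev embedding on $\Z^2$; any purely functional bound through $\|f\|_\infty$ (or interpolation between $\|f\|_\infty^2\le C\ln n\,\cE_0(f,f)$ and the Poincar\'e inequality) loses at least a power of $\ln n$ and yields only $\lambda_n\gtrsim 1/(n^2\ln n)$, not $k_1/n^2$. This is precisely the case the paper's exit-time moment machinery (with the restriction $\alpha'\le 2$ in (\ref{restrictalpha'})) was built to handle, and nothing in your proposal replaces it. A second gap is part (\ref{main:psa}) in the regime $d\ge3$, $1<\alpha<d/2$: as written, your Sobolev bound is only exploited under $\alpha>d/2$ (via the SLLN), and your Green-diagonal bound gives only $\lambda_n^\circ\gtrsim n^{-d}$, whereas $n^{-d/\alpha-\eps}$ is required. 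Your own inequality does give it, but only after combining it with the almost sure growth $\sum_{B_n}\tau_x^{d/2}\le n^{d^2/(2\alpha)+\eps}$ (Proposition \ref{hbehav}(\ref{hpetrov}) applied to $\tau^{d/2}$), which you must state.

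The trap upper bounds also rest on asserted steps. That the neighbours of the maximizer $x^*$ carry $O(1)$ values is exactly the nontrivial probabilistic point; the paper proves it by an exchangeability argument (conditioning on the location of the maximum) in Theorem \ref{relax}. Moreover ``the weighted capacity is comparable to that of simple random walk'' is not automatic: since the conductances $(\tau_x\tau_y)^a\ge1$ are heavy-tailed, monotonicity only pushes the capacity \emph{up}, so in $d=1$ you need the law-of-large-numbers statement that a positive fraction of the edges on each side of $x^*$ have bounded conductance (this is the content of the paper's construction in Theorem \ref{upper1d}, where the test function is made constant across deep traps), not merely control of the edges adjacent to $x^*$. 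Finally, part (\ref{main:psa}) requires almost sure versions of the trap upper bound, not the in-probability ones you derive at $x^*$; the paper obtains these by optimizing $\tau_x/(M_x)^{2a}$ over $x$ and Borel--Cantelli (Theorem \ref{relax}(\ref{relaxps})), and some such argument is missing from your proposal.
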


Let us now give some heuristics about the behaviour of $(X_t)$. If $a=0$, the walk is in fact a time-change of the simple random walk~: arriving at some site $x$, it waits an exponential time of mean $\tau_x$ before jumping to a neighbouring site chosen uniformly. When $a \neq 0$, things get more complicated. Suppose that the walk arrives at some deep trap, that is a site $x$ where $\tau_x$ is very large. Compared with the $a=0$ case, the walk will leave site $x$ faster. On the other hand, once on a neighbouring site, it will come back to $x$ with very high probability. These two competing effects can compensate remarkably in the limit, and indeed our main results are independent of $a$ (as they also are in \cite{dim1}).

We propose to call $(X_t)_{t \ge 0}$ a \emph{random walk among random traps}. It seems to us that for its relative simplicity, it should be considered one of the basic types of random walks in random environments to study, just as is the random walk among random conductances. Although one could have the feeling that theses two types are basically the same, one attaching randomness to edges of the graph and the other to sites, they exhibit very different behaviours. For instance, the reversible measure is not the uniform one in the case of random traps (it gives weight $\tau_x$ to site $x$). Also, if $d \ge 2$, the random walk in random conductances tends to avoid visiting regions where conductance is very low (and where time spent to `get out' may be high). On the other hand, when walking among random traps, say for $a = 0$, the path is the same as for the simple random walk, and the walk is not inclined to avoid regions from which it takes a long time to get out. See \cite{alex} for a nice discussion about this issue.

This type of walk gained interest when J.P.~Bouchaud \cite{bou} proposed it as a phenomenological model to explain aging of spin glasses, and as a consequence, what we call `random walk among random traps' is also known as \emph{Bouchaud's model}. Later on, \cite{rmb} introduced the full model as presented here (including the $a \in [0,1]$), which allows them to get more diverse aging scalings.

When $\E[\tau_0]$ is finite (in particular when $\alpha > 1$), one can apply results of \cite{masi} to prove that, under the averaged law, $(X_t)$ is diffusive and converges to Brownian motion after rescaling.

In one dimension, for $a=0$ and $\alpha < 1$, L.R.G.~Fontes, M.~Isopi and C.M.~Newman \cite{fin} proved that almost surely the process was subdiffusive and obtained convergence of the rescaled process to a singular diffusion, as well as aging. The results have been extended to general $a$ by G.~Ben Arous and J.~\v{C}ern\'y in \cite{dim1}. Another (also subdiffusive) scaling limit was identified when $d \ge 2$, $\alpha < 1$ and $a=0$ in \cite{dim2}. We refer to \cite{rev} for a review on the subject. To our knowledge, nothing was known in the case when $a \neq 0$ and $d \ge 2$. 

This note comes as a partial answer to a question of \cite{rev}, asking for the ``nature of the spectrum of the Markov chain close to its edge. Naturally, the long time behaviour of $X_t$ can be understood from the edge of the spectrum of the generator $\L$. This question deserves further study (see \cite{bovfag1}, \cite{bovfag2} and also \cite{mb97}).''

Upper bounds on $\lambda_n$ are obtained rather easily, using its variational characterisation (see equation~(\ref{variational})), and then choosing appropriate test functions. 
Finding the corresponding lower bounds is more difficult. Remarkably, the classical techniques exposed for instance in the review \cite{SC}, although giving the appropriate bounds in certain cases, did not enable us to conclude in general. We show in section~\ref{s:cs} that the distinguished path method (see e.g. \cite[Theorem 3.2.3]{SC}), that proved efficient for instance in \cite[Section 3]{fm} for random walks among random conductances, is bound to give an extra $1$ in the exponent when $d \ge 2$ (for the one-dimensional case, \cite[Section 3.7]{chen} proves that the method is sharp, as can be checked directly in our context). In order to solve the problem, we have chosen to bound the exit times of the walk from $B_n$. For $a = 0$ and $d \ge 3$, we find estimates on these exit times using the knowledge of the Green function of the embedded discrete-time simple random walk, together with a moments computation. This method can be modified to treat as well the two-dimensinal case, and an elementary argument extends the bounds to general $a$, see equation~(\ref{lambdalambda0}) (but note that it can in fact be applied directly to general $a$, see a previous version of this note \cite{version1}). However, we would like to draw reader's attention to the fact that this method gives little indication on how to extend the results to a conservative dynamics (for instance, with periodic boundary conditions instead of Dirichlet).

\noindent \textbf{Remark.} A natural choice of $(\tau_x)$ from the statistical physics' point of view is the following~: first choose independently for each site a random variable $-E_x$ with law exponential of parameter $1$, 
and define $\tau_x$ to be $\exp(-\beta E_x)$, where $\beta$ represents the inverse of the temperature. Then one can check that $F \in RV_{-1/\beta}$, and the irregularity that appears at $\beta = 1$ for $d \le 2$ and at $\beta = 2/d$ for larger $d$ can be regarded as a phase transition (the anomalous behaviour occurring for $\beta$ large, that is for small temperature, or in our context, small $\alpha$).

It may seem surprising that this new phase transition does not appear at the same threshold than the diffusive/subdiffusive transition (that at least for $a=0$ occurs when $\alpha (=1/\beta) =1$ in any dimension). The reason for this is the following~: although the principal eigenvalue will `feel' the very deepest traps of the box (of order $n^{d/\alpha}$), the process (at least when $a=0$) will exit the box after visiting only some $n^2$ sites, thus having seen only traps of order at most $n^{2/\alpha}$.

Before going on to show how Theorem \ref{main} is a consequence of the results of the following sections, we need to recall some facts about the asymptotic behaviour of sums and maxima of $(\tau_x)$.
\begin{prop}
\label{hbehav}
\begin{enumerate}
\item
\label{hpetrov}
For any $\eps > 0$ and almost every environment~:
\begin{equation*}
n^{-(\max(d,d/\alpha) + \eps)} \sum_{x \in B_n} \tau_x \rightarrow 0 \qquad (n \to +\infty).
\end{equation*}
\item
\label{htriv}
For any $\eps > 0$ and almost every environment~:
\begin{equation*}
n^{-(\max(d,d/\alpha) - \eps)} \sum_{x \in B_n} \tau_x \rightarrow + \infty \qquad (n \to +\infty).
\end{equation*}
\item
\label{hmax}
There exists a random variable $M_{\infty}$ with values in $(0, + \infty)$ such that the re\-scaled maxima converge in law to $M_{\infty}$~:
\begin{equation*}
\frac{1}{h(n^d)} \ \max_{x \in B_n} \tau_x \Rightarrow M_{\infty} \qquad (n \to +\infty).
\end{equation*}
\item
\label{hsum}
If $\alpha < 1$, then there exists a random variable $S_{\infty}$ with values in $(0, + \infty)$ such that the rescaled partial sums converge in law to $S_{\infty}$~:
\begin{equation*}
\frac{1}{h(n^d)} \sum_{x \in B_n} \tau_x \Rightarrow S_{\infty} \qquad (n \to +\infty).
\end{equation*}
\end{enumerate}
\end{prop}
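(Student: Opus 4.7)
The plan is to treat the four statements separately, since they call on different classical results.

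Parts \ref{hmax} and \ref{hsum} follow directly from the standard theory of regularly varying distributions. For the maximum, since $|B_n|\sim 2^d n^d$,
\[
\P\Bigl[\max_{x\in B_n}\tau_x \le h(n^d)\,t\Bigr] = F\bigl(h(n^d)\,t\bigr)^{|B_n|},
\]
and combining regular variation of $F$ with $F(h(y)) \sim 1/y$ yields a Fr\'echet-type limit $\exp(-c\,t^{-\alpha})$ for a suitable constant $c>0$. For (\ref{hsum}), the hypothesis $F\in RV_{-\alpha}$ with $\alpha<1$ places $\tau_0$ in the domain of attraction of a positive $\alpha$-stable law, and $h(n^d)$ is precisely the standard normalisation for a sum of $|B_n|$ variables; since $\alpha<1$ no centering is needed, so convergence is supplied by the generalised central limit theorem (cf.\ \cite[Theorem XVII.5.2]{fel}).

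For part (\ref{hpetrov}), I would split on $\alpha$. When $\alpha>1$, (\ref{ppF}) gives $\E[\tau_0]<\infty$, and the strong law of large numbers yields $\sum_{x\in B_n}\tau_x \sim \E[\tau_0]\,|B_n|$ almost surely, matching $\max(d,d/\alpha)=d$. When $\alpha \le 1$, I would truncate each $\tau_x$ at level $K_n = n^{d/\alpha+\eps/2}$, working along a geometric subsequence $n_k = 2^k$. Using the bound $F(y)\le y^{-\alpha+\delta}$ (valid eventually by (\ref{ppF})) with $\delta$ small, the series $\sum_k |B_{n_k}|\, F(K_{n_k})$ is summable, so Borel--Cantelli ensures that almost surely, for $k$ large, every $\tau_x$ with $x \in B_{n_k}$ is below $K_{n_k}$. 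A first-moment (or Chebyshev) bound on the truncated sum, whose mean is controlled by $|B_n|\,\E[\tau_0 \wedge K_n] = o(n^{d/\alpha+\eps})$ again via (\ref{ppF}), gives the desired bound along $n_k$, and monotonicity of $\sum_{x\in B_n}\tau_x$ in $n$ extends it to all $n$.

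Part (\ref{htriv}) is the easiest. Assumption 2 yields $\sum_{x\in B_n}\tau_x \ge |B_n|\sim 2^d n^d$, which is already enough when $\alpha \ge 1$, since $\max(d,d/\alpha)=d$. For $\alpha<1$, a single unusually large $\tau_x$ suffices: by (\ref{ppF}), $\P[\tau_0 > n^{d/\alpha-\eps}] \ge n^{-d+\alpha\eps/2}$ for $n$ large, so Borel--Cantelli applied to the disjoint annuli $B_{2^{k+1}}\setminus B_{2^k}$ yields, almost surely for $k$ large, a site $x\in B_{2^{k+1}}\setminus B_{2^k}$ with $\tau_x > (2^{k+1})^{d/\alpha-\eps}$. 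The main obstacle is the upper bound in (\ref{hpetrov}) when $\alpha\le 1$: the classical SLLN is unavailable since $\E[\tau_0]=\infty$, and the truncation level must be chosen carefully so that both the contribution from exceedances (controlled via Borel--Cantelli) and the truncated mean (controlled via (\ref{ppF})) stay well below $n^{d/\alpha+\eps}$.
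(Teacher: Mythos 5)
Your proposal is correct, and for most of the statement it follows the same route as the paper: for part (\ref{htriv}) with $\alpha<1$ the paper likewise bounds the sum below by the maximum and applies Borel--Cantelli to $(1-F(Mn^{d/\alpha-\eps}))^{(2n+1)^d}$, and for parts (\ref{hmax}) and (\ref{hsum}) it simply cites Feller/Resnick for exactly the extreme-value and stable-limit facts you sketch. The genuine difference is part (\ref{hpetrov}) in the case $\alpha\le 1$: the paper disposes of it by invoking a general result of Petrov (Theorem 6.9 of the cited monograph) on the a.s.\ growth of sums of i.i.d.\ variables with infinite mean, whereas you give a self-contained argument (truncation at $n^{d/\alpha+\eps/2}$, Borel--Cantelli along dyadic scales to kill exceedances, a Markov bound on the truncated sum, monotonicity to pass to all $n$); this costs some bookkeeping but avoids the external citation and handles $\alpha=1$ uniformly. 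Likewise, for $\alpha\ge1$ in part (\ref{htriv}) you use Assumption 2 ($\tau_0\ge1$) instead of the law of large numbers, which is simpler and also covers $\alpha=1$. Two small points to correct in the write-up: since $F(y)=\P[\tau_0>y]$ is the tail, the display in part (\ref{hmax}) should read $\P[\max_{x\in B_n}\tau_x\le h(n^d)\,t]=(1-F(h(n^d)\,t))^{|B_n|}$ rather than $F(\cdot)^{|B_n|}$ (your subsequent asymptotics, using $F(h(y))\sim 1/y$, are consistent with the corrected formula); and in part (\ref{htriv}) the ``for all large $k$'' conclusion comes from the first Borel--Cantelli lemma applied to the complementary events, whose probabilities decay like $\exp(-c\,2^{k\alpha\eps/2})$, so the independence afforded by the disjoint annuli is not actually needed.
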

\begin{proof}
For the first statement, it is a consequence of the law of large numbers if $\alpha > 1$, otherwise it is an application of \cite[Theorem 6.9]{pet}. For the second one, it comes again from the law of large numbers if $\alpha > 1$. Otherwise, observe that the sum is larger than the maximum of its terms, and 
$$
\P\left[\max_{x \in B_n} \tau_x \le M n^{d/\alpha - \eps} \right] = (1-F(M n^{d/\alpha - \eps}))^{(2n+1)^d}.
$$
Using the properties of $F$ (see (\ref{ppF})), we see that the latter is the general term of a convergent series, and we can apply the Borel-Cantelli lemma.
Now the convergence of the rescaled maxima is given in \cite[Section VIII.8]{fel} or \cite[Proposition 1.11]{res}. For the convergence of the partial sums, see \cite[Section XVII.5]{fel}.
\end{proof}

Apart from this introduction, the paper is divided into four sections and an Appendix. In section \ref{s:lambda0}, we use the variational characterisation to get bounds on $\lambda_n^\circ$ and $\lambda_n$ that are sharp when $\alpha \le 1$ or $d=1$. In order to find a good lower bound on $\lambda_n^\circ$ (easily extended to a lower bound on $\lambda_n$) when $d \ge 2$ and $\alpha > 1$, we introduce in section \ref{s:Green} the embedded discrete time random walk. When $a=0$, it is the simple random walk, and the explicit knowledge of its Green function enables us to conclude. In section \ref{s:upper}, upper bounds for $\lambda_n$ are computed. Finally, we analyse the limitation of the distinguished path method in section \ref{s:cs}. 

Let us see how to deduce part (\ref{main:psa}) of Theorem \ref{main} from the rest of the paper. Part (\ref{anyalpha}) of Theorem \ref{thmlambda0} gives an upper bound on the exponent of the principal eigenvalue, that needs to be improved when $d \ge 3$ and $\alpha > 1$. This is done by Theorem \ref{green3}. Now for the associated lower bounds on the exponent of the principal eigenvalue, they come from Theorem \ref{upper1d} and part (\ref{htriv}) of Proposition \ref{hbehav} if $d=1$ ; from part (\ref{relaxps}) of Theorem \ref{relax} and Theorem \ref{taua} if $d \ge 2$.

Concerning part (\ref{main:psr}) of Theorem \ref{main}, if $d=1$ and $\alpha > 1$, the lower bound comes from part (\ref{finiteE}) of Theorem \ref{thmlambda0}. If $d\ge 2$  and $\alpha > d/2$, the lower bound is given by part (\ref{supn2}) of Theorem \ref{green3}. In any case, Theorem \ref{taua} gives the desired upper bound on $\lambda_n$.

Finally, for parts (\ref{main:dn2}) and (\ref{main:d2}) of Theorem \ref{main}, part (\ref{alpha<1}) of Theorem \ref{thmlambda0} gives the desired result for $\lambda_n^\circ$ as well as a lower bound on $\lambda_n$. In dimension one, the upper estimate on $\lambda_n$ is given by Theorem \ref{upper1d} and part (\ref{hsum}) of Proposition \ref{hbehav}, while if $d \ge 2$, it comes from part (\ref{relaxprob}) of Theorem \ref{relax} together with part (\ref{hmax}) of Proposition~\ref{hbehav}.

\noindent {\bf Notations.} 
The operator $\L_n$ is self-adjoint for the scalar product $(\cdot, \cdot)$ defined by~:
$$
(f,g) = \sum f(x)g(x) \tau_x.
$$ 
We write $L^2(B_n)$ for the set of functions that vanish outside $B_n$ (equipped with the former scalar product). For two points $x,y \in \Z^d$, we write $x \sim y$ when they are neighbours (that is, when $\|x-y\|=1$). We define the Dirichlet form associated to $\L$~:
\begin{eqnarray*}
\label{num}
\cE(f,g) = (-\L f,g) & = & \sum_{\substack{x,y \in \Z^d \\ x \sim y}} \tau_x^a \tau_y^a g(x)(f(x)-f(y)) \notag \\
 & = & \sum_{\substack{x,y \in \Z^d \\ x \sim y}} \tau_x^a \tau_y^a g(y)(f(y)-f(x)) \notag \\
 & = & \frac{1}{2} \sum_{\substack{x,y \in \Z^d \\ x \sim y}} \tau_x^a \tau_y^a (f(y)-f(x))(g(y)-g(x))
\end{eqnarray*}
(taking the half-sum of the last two expressions), and $\cE_0$ the Dirichlet form obtained when $a=0$. We have~:
\begin{equation}
\label{variational}
\lambda_n = \inf_{\substack{f \in L^2(B_n) \\ f \neq 0}} \frac{\cE(f,f)}{(f,f)}.
\end{equation}
Assumption 2 gives that $\cE(f,f) \ge \cE_0(f,f)$, so it is clear that 
\begin{equation}
\label{lambdalambda0}
\lambda_n \ge \lambda_n^\circ.
\end{equation}
We further need to define the boundary of $B_n$, as $\partial B_n = B_{n+1} \setminus B_n$. If $K$ is some set, $|K|$ stands for its cardinal. We write $\PP_x^\tau$ for the law of the process starting from site $x$ (and $\EE_x^\tau$ for the corresponding expectation).

The real number $C > 0$ represents a generic constant that need not be the same from one occurrence to another.

%
%
%
%
%
%
\section{The variational formula}
\label{s:lambda0}
\setcounter{equation}{0}
We will use here the variational characterisation of $\lambda_n^\circ$~:
\begin{equation}
\label{lambda0}
\lambda_n^\circ = \inf_{\substack{f \in L^2(B_n) \\ f \neq 0}} \frac{\cE_0(f,f)}{(f,f)}.
\end{equation}
We define 
$$
C_n = \inf \left\{\cE_0(f,f) \ | \ f \in L^2(B_n), f(0) = 1 \right\}.
$$
Noting that $B_n$ is a finite set, one can see by a compacity argument that the infimum is reached for some function $V_n$. The behaviours of $C_n$ and $\lambda_n^\circ$ are related in the following way.
\begin{prop}
\label{proplambda0}
For any $n$ and any environment, we have~:
$$
\frac{C_{2n}}{\sum_{x \in B_n} \tau_x} \le \lambda_n^\circ,
$$
$$
\lambda_{2n+1}^\circ \le \lambda_{2n}^\circ \le \frac{C_n}{\max_{B_n} \tau}.
$$
\end{prop}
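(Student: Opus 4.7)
The plan is to exploit the translation invariance of the Dirichlet form $\cE_0$: when $a = 0$, the quantity $\cE_0(f,f) = \frac{1}{2}\sum_{x \sim y}(f(x)-f(y))^2$ depends only on the differences of $f$ on neighbouring sites and carries no explicit $\tau$-weight, so shifting the support of a test function costs nothing in the numerator of the Rayleigh quotient. Both inequalities will follow by starting from the variational formula \eqref{lambda0} and a judicious choice of test function obtained by translating either $V_n$ or a near-minimiser of $\lambda_n^\circ$.

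For the upper bound $\lambda_{2n}^\circ \le C_n/\max_{B_n}\tau$, I would pick $x^\star \in B_n$ maximising $\tau$ and set $\td V(x) = V_n(x - x^\star)$. Since $x^\star \in B_n$ and $V_n$ is supported in $B_n$, the shifted function $\td V$ is supported in $x^\star + B_n \subseteq B_{2n}$, hence lies in $L^2(B_{2n})$. Translation invariance of $\cE_0$ gives $\cE_0(\td V, \td V) = \cE_0(V_n, V_n) = C_n$, while $(\td V, \td V) \ge \td V(x^\star)^2 \tau_{x^\star} = \max_{B_n}\tau$ because $\td V(x^\star) = V_n(0) = 1$. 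Plugging into \eqref{lambda0} yields the claimed bound. The monotonicity $\lambda_{2n+1}^\circ \le \lambda_{2n}^\circ$ is then immediate from the variational formula, since $L^2(B_{2n}) \subseteq L^2(B_{2n+1})$ and the infimum is taken over a larger set.

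For the lower bound $C_{2n}/\sum_{B_n}\tau_x \le \lambda_n^\circ$, I would start from an arbitrary non-zero $f \in L^2(B_n)$, choose $x_0 \in B_n$ where $|f|$ attains its maximum, and normalise-and-shift: $g(y) := f(y + x_0)/f(x_0)$. Then $g(0) = 1$, and the support of $g$ lies in $B_n - x_0 \subseteq B_{2n}$, so $g$ is admissible in the definition of $C_{2n}$. Using once more that $\cE_0$ is translation invariant and quadratic, this gives
$$
\cE_0(f,f) = f(x_0)^2 \, \cE_0(g,g) \ge f(x_0)^2 \, C_{2n}.
$$
On the other hand, by the choice of $x_0$,
$$
(f,f) = \sum_{x \in B_n} f(x)^2 \tau_x \le f(x_0)^2 \sum_{x \in B_n} \tau_x.
$$
Dividing these two estimates and taking the infimum over $f$ via \eqref{lambda0} delivers the desired lower bound.

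There is no real obstacle here; the only point to be careful about is that both shifts keep the shifted function inside the larger box ($B_{2n}$) — this is why the statement uses $C_{2n}$ rather than $C_n$ on the left-hand side, and why the upper bound concerns $\lambda_{2n}^\circ$ rather than $\lambda_n^\circ$. The fact that one has to pay by enlarging the box is unavoidable with this ``translate-and-normalise'' strategy, but because we only lose a factor $2$ in the size of the box the resulting asymptotics for $\lambda_n^\circ$ and $C_n/\max_{B_n}\tau$ or $C_n/\sum_{B_n}\tau$ will match up to constants, which is all that is needed for the subsequent analysis.
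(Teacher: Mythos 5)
Your proposal is correct and follows essentially the same route as the paper: translate a test function to exploit the translation invariance of $\cE_0$ (shifting $V_n$ onto the site of $\max_{B_n}\tau$ for the upper bound, and recentering a near-minimiser at $0$ for the lower bound), paying only an enlargement of the box from $B_n$ to $B_{2n}$. The only cosmetic difference is that you normalise by dividing by $f(x_0)$, whereas the paper normalises $\|f\|_\infty=1$ and flips the sign if needed; both handle the sign issue correctly.
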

\begin{proof}
Considering the homogeneity of the quotient in (\ref{lambda0}), we can restrict the infimum to be taken over all $f$ with $\|f\|_\infty = 1$. Let $f$ be such a function, and $x_0 \in B_n$ such that $|f(x_0)| = 1$. Possibly changing $f$ in $-f$, we can assume $f(x_0) = 1$. Noting that the function $g = f(\cdot + x_0)$ is in $L^2(B_{2n})$ and satisfies $g(0) = 1$, we have~:
$$
\cE_0(f,f) = \cE_0(g,g) \ge C_{2n}.
$$
On the other hand, as $\|f\|_\infty = 1$, we have~:
$$
(f,f) \le \sum_{x \in B_n} \tau_x,
$$
and these lead to the first desired inequality.

The fact that $\lambda_{2n+1}^\circ \le \lambda_{2n}^\circ$ is clear from (\ref{lambda0}). Now let $x_1 \in B_n$ be such that $\max_{B_n} \tau = \tau_{x_1}$, and consider the function $h = V_n(\cdot - x_1) \in L^2(B_{2n})$. We get~:
$$
\cE_0(h, h) = \cE_0(V_n,V_n) = C_n.
$$
But note that $h(x_1) = 1$, therefore~:
$$
(h,h) \ge \tau_{x_1} = \max_{B_n} \tau,
$$
and we get the second inequality.
\end{proof}
We now precise the asymptotic behaviour of $C_n$.
\begin{prop}
\label{Cn}
If $d=1$, then~:
$$
C_n = \frac{2}{n+1}.
$$
If $d=2$, then there exist $k_1, k_2$ such that for all $n$~:
$$
\frac{k_1}{\ln(n)} \le C_n \le \frac{k_2}{\ln(n)}.
$$
If $d \ge 3$, then $C_n$ converges to a strictly positive number.
\end{prop}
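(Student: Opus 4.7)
\textit{Proof strategy.} Since $\cE_0$ depends only on the graph of $\Z^d$ with unit conductances, $C_n$ is the purely deterministic electrical capacity of $\{0\}$ relative to $\Z^d\setminus B_n$. My plan is to exploit the identity
$$
C_n = 2d \cdot \PP_0^{\mathrm{SRW}}[\sigma_n < T_0^+],
$$
where $\sigma_n$ denotes the first exit from $B_n$ and $T_0^+$ the first return to $0$ of the discrete-time simple random walk on $\Z^d$. This identity comes from the fact that the minimizer $V_n$ (which exists by finite-dimensional compactness) is harmonic on $B_n\setminus\{0\}$ with $V_n(0)=1$ and $V_n\equiv 0$ outside $B_n$, and hence admits the probabilistic representation $V_n(x)=\PP_x^{\mathrm{SRW}}[T_0<\sigma_n]$. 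The discrete Green identity then reduces $\cE_0(V_n,V_n)$ to $-\Delta V_n(0)$, and a first-step analysis at $0$ produces the escape probability on the right-hand side.

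For $d=1$, gambler's ruin gives this escape probability exactly as $1/(n+1)$, which yields $C_n = 2/(n+1)$. For $d\ge 3$, the simple random walk is transient, so $\PP_0^{\mathrm{SRW}}[T_0^+=\infty]>0$; since the events $\{\sigma_n<T_0^+\}$ increase to $\{T_0^+=\infty\}$, monotone convergence yields $C_n \to 2d\,\PP_0^{\mathrm{SRW}}[T_0^+=\infty]>0$.

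The only delicate case is $d=2$, which I would handle with two complementary electrical-network estimates. For the upper bound $C_n \le k_2/\ln n$ (equivalently a lower bound of order $\ln n$ on the effective resistance $1/C_n$), I would apply Nash--Williams' inequality to the cutsets $\partial^E B_k$ separating $0$ from $B_n^c$ for $k=0,\dots,n-1$: each has size at most $Ck$, so $1/C_n \ge \sum_{k=0}^{n-1} 1/(Ck) \ge c\ln n$. For the matching lower bound $C_n \ge k_1/\ln n$, I would invoke Thomson's principle and construct a $D_4$-symmetric unit flow from $0$ to $\partial B_n$ that routes current uniformly across the $\asymp 8k$ edges crossing the $\ell^\infty$-shell at distance $k$; its Dirichlet energy $\sum_{k=1}^n 8k\cdot(8k)^{-2}$ is of order $\ln n$, giving the desired upper bound on the effective resistance.

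The main obstacle is thus the two-dimensional case: in $d=1$ and $d\ge 3$ the reduction to elementary first-passage probabilities is immediate, but in $d=2$ the sharp $1/\ln n$ scaling requires both a precise cutset bound and an explicit near-optimal flow, since the Dirichlet variational principle alone only provides one of the two directions.
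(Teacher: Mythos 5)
Your proof is correct and takes essentially the same route as the paper: both identify $C_n$ with the effective conductance between $0$ and $\partial B_n$ in the unit-resistance network, the paper then settling $d=2$ and $d\ge 3$ by citing Lyons--Peres while you re-derive those standard facts via the escape-probability identity, Nash--Williams and Thomson. Two minor points to adjust: the events $\{\sigma_n < T_0^+\}$ \emph{decrease} (not increase) to $\{T_0^+=\infty\}$, which still gives $C_n \downarrow 2d\,\PP_0[T_0^+=\infty]>0$; and a flow that is literally uniform over each $\ell^\infty$-shell need not satisfy the node law, so one should instead use the standard random-path (or symmetrized radial) flow, whose current through an edge at distance $k$ is $O(1/k)$, yielding the same $O(\ln n)$ energy bound.
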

\begin{proof}
We can regard $B_{n+1}$ as an electrical graph (see \cite[Chapter 2]{trees}), with each edge representing a resistance of value $1$. One can see that $V_n$ is harmonic on every point that is not $0$ nor a point of $\partial B_n$. Thus it coincides with the potential on the electrical graph, with the constraints that $V_n(0) = 1$ and ${V_n}_{|\partial B_n} = 0$. The number $C_n$ is the effective conductance between $0$ and $\partial B_n$. In dimension 1, a direct computation gives the result. If $d=2$, then we can use \cite[Proposition 2.14]{trees}. In larger dimension, the simple random walk is transient, and therefore (see \cite[Theorem 2.3]{trees}) $C_n$ converges to a strictly positive number.
\end{proof}
From this, we can deduce the following.
\begin{thm}
\label{thmlambda0}
\begin{enumerate}
\item
\label{alpha<1}
If $\alpha < 1$, then for any $\eps > 0$, there exist $\eta, M > 0$ such that for all $n$ large enough~:
$$
\P\left[\eta \le  \frac{h(n^d)}{C_n} \lambda_n^\circ \le M\right] \ge 1 - \eps,
$$
$$
\P\left[\eta \le  \frac{h(n^d)}{C_n} \lambda_n\right] \ge 1 - \eps.
$$
\item
\label{anyalpha}
For almost every environment, we have~:
$$
\limsup_{n \to \infty} - \frac{\ln(\lambda_n)}{\ln(n)} \le 
\left|
\begin{array}{ll}
\max\left(2,1+\dfrac{1}{\alpha}\right) & \text{if } d = 1 ,\\
\max\left(d,\dfrac{d}{\alpha}\right) & \text{if } d \ge 2.
\end{array}
\right.
$$
\item 
\label{finiteE}
If $\E[\tau_0]$ is finite, then for almost every environment and all $n$ large enough~:
$$
\lambda_n \ge \frac{C_{2n}}{(2n+1)^d (\E[\tau_0]+1)}.
$$
\end{enumerate}
\end{thm}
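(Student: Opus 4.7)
The plan is to derive all three parts as direct consequences of the two-sided sandwich in Proposition \ref{proplambda0}, combined with the asymptotics of $C_n$ (Proposition \ref{Cn}) and the sum/maximum statistics of the environment (Proposition \ref{hbehav}). Only part (\ref{finiteE}) requires an additional input, namely the strong law of large numbers.

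For part (\ref{alpha<1}) I would start from the lower bound $\lambda_n^\circ \ge C_{2n}/\sum_{x \in B_n} \tau_x$ of Proposition \ref{proplambda0}. Part (\ref{hsum}) of Proposition \ref{hbehav} says that $h(n^d)^{-1}\sum_{x\in B_n}\tau_x$ converges in law to a strictly positive $S_\infty$, so the sequence is tight: for any $\eps > 0$ there exists $K$ with $\P[\sum_{x \in B_n}\tau_x \le K\, h(n^d)] \ge 1-\eps$. Combined with the fact that $C_{2n}/C_n$ is bounded below (a direct check in each dimension using Proposition \ref{Cn}: the ratio tends to $1/2$, $1$, $1$ respectively), this yields $h(n^d)\lambda_n^\circ/C_n \ge \eta$ with the prescribed probability; the lower bound on $\lambda_n$ follows from (\ref{lambdalambda0}). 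The matching upper bound on $\lambda_n^\circ$ is symmetric, using $\lambda_{2n}^\circ \le C_n/\max_{B_n}\tau$ from Proposition \ref{proplambda0} together with the tightness of $h(n^d)/\max_{B_n}\tau$ provided by part (\ref{hmax}) of Proposition \ref{hbehav}. The only mild subtlety is the parity mismatch between $n$ and $2n$ in the index of $C$; this is absorbed using the regular variation of $h$ (the ratios $h((2n)^d)/h(n^d)$ are bounded) and the boundedness of $C_n/C_{2n}$ already noted.

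For part (\ref{anyalpha}) I would use the same lower bound $\lambda_n \ge \lambda_n^\circ \ge C_{2n}/\sum_{x \in B_n}\tau_x$ but replace tightness by the almost-sure estimate of part (\ref{hpetrov}) of Proposition \ref{hbehav}: for any $\eps > 0$, almost surely $\sum_{x \in B_n}\tau_x \le n^{\max(d,d/\alpha)+\eps}$ for $n$ large. Plugging in the dimension-dependent lower bounds on $C_n$ from Proposition \ref{Cn} (of order $1/n$, $1/\ln n$, and a positive constant respectively) produces, up to subpolynomial factors, $\lambda_n \ge n^{-\max(d,d/\alpha)-\eps}$ in dimension $\ge 2$ and $\lambda_n \ge n^{-\max(2,\,1+1/\alpha)-\eps}$ in dimension one. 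Taking $\limsup$ and letting $\eps \to 0$ gives the announced bound on $-\ln\lambda_n/\ln n$.

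Part (\ref{finiteE}) is the quickest: by the strong law of large numbers applied to the i.i.d.\ family $(\tau_x)$, $\sum_{x\in B_n}\tau_x \le (2n+1)^d(\E[\tau_0]+1)$ almost surely for $n$ large enough, and this inserted into Proposition \ref{proplambda0} together with (\ref{lambdalambda0}) is exactly the claim. No real obstacle is expected; the main care is bookkeeping the scaling of $C_n$ in each dimension and the index parity. One point worth highlighting is that this method does not yield an upper bound on $\lambda_n$ in part (\ref{alpha<1}), since (\ref{lambdalambda0}) is one-sided, which is precisely why the theorem restricts to a lower bound on $\lambda_n$ there and the matching upper bound is produced elsewhere in the paper.
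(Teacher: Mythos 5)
Your proposal is correct and follows essentially the same route as the paper, whose proof is just a terse combination of Proposition \ref{proplambda0}, Proposition \ref{Cn}, Proposition \ref{hbehav} (parts on maxima and sums for the $\alpha<1$ case, the almost-sure sum bound for the exponent bound), inequality (\ref{lambdalambda0}), and the law of large numbers for the last part; you have merely filled in the tightness, parity, and $C_{2n}/C_n$ bookkeeping. One small quibble: in dimension $2$ Proposition \ref{Cn} gives only two-sided bounds $k_1/\ln n \le C_n \le k_2/\ln n$, so the ratio $C_{2n}/C_n$ need not tend to $1$, but it is bounded away from $0$ and $\infty$, which is all your argument actually uses.
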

\begin{proof}
Note first that as given by equation (\ref{lambdalambda0}), we have that $\lambda_n \ge \lambda_n^\circ$. The first part of the theorem is a consequence of Propositions \ref{proplambda0}, \ref{Cn} and parts (\ref{hmax}) and (\ref{hsum}) of Proposition \ref{hbehav}. For the second part, use part (\ref{hpetrov}) of Proposition \ref{hbehav} instead. The last part is an application of the law of large numbers.
\end{proof}
As far as lower bounds are concerned, parts (\ref{main:dn2}) and (\ref{main:d2}) of Theorem \ref{main} are now obtained. However, part (\ref{main:psa}) of Theorem \ref{main} is proved only for $d \le 2$ or $\alpha \le 1$, and part (\ref{main:psr}) only for $d=1$. The following section provides the missing lower bounds.
%
%
%
%
%
%
\section{Exit time upper bounds when $a=0$}
\label{s:Green}
\setcounter{equation}{0}
This section aims at finding good lower bounds for $\lambda_n$ when $d \ge 2$ and $\alpha > 1$. To do so, we will use the exit times $T_n$ from $B_n$~:
$$
T_n = \inf \{t \ge 0 : X_t \notin B_n \}.
$$
The principal eigenvalue and the exit time from $B_n$ are indeed related by the following (general) result~:
\begin{prop}
\label{sortie}
For any environment $\tau$, any $n \in \N$ and $t \ge 0$, we have
$$
e^{-t \lambda_n} \le \sup_{x \in B_n} \PP_x^\tau[T_n > t] \le \frac{\sup_{x \in B_n} \EE_x^\tau[T_n]}{t} .
$$
\end{prop}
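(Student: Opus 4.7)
The second inequality is immediate from Markov's inequality applied to the nonnegative random variable $T_n$: for each $x \in B_n$, $\PP_x^\tau[T_n > t] \le \EE_x^\tau[T_n]/t$, and then take the supremum over $x \in B_n$.

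For the first inequality, the plan is to pair the Dirichlet semigroup with its Feynman--Kac-style probabilistic representation. Let $(P_t^n)_{t \ge 0}$ denote the semigroup generated by $\L_n$ on $L^2(B_n)$, so that for $f \in L^2(B_n)$ and $x \in B_n$,
$$
P_t^n f(x) = \EE_x^\tau\!\left[ f(X_t) \1_{T_n > t} \right],
$$
by the standard killing interpretation of Dirichlet boundary conditions. Since $-\L_n$ is self-adjoint on $L^2(B_n)$ with smallest eigenvalue $\lambda_n$, there exists an eigenfunction $\varphi_n \in L^2(B_n)$ with $-\L_n \varphi_n = \lambda_n \varphi_n$; irreducibility of the chain on $B_n$ together with Perron--Frobenius ensures that $\varphi_n$ can be chosen strictly positive on $B_n$.

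Applying the semigroup identity gives $e^{-t\lambda_n} \varphi_n(x) = P_t^n \varphi_n(x) = \EE_x^\tau[\varphi_n(X_t) \1_{T_n > t}]$. Bounding $\varphi_n(X_t) \le \|\varphi_n\|_\infty$ yields
$$
e^{-t\lambda_n} \varphi_n(x) \le \|\varphi_n\|_\infty \, \PP_x^\tau[T_n > t].
$$
Specializing to a site $x^* \in B_n$ at which $\varphi_n$ attains its maximum, the factors $\|\varphi_n\|_\infty = \varphi_n(x^*)$ cancel, and we obtain $e^{-t\lambda_n} \le \PP_{x^*}^\tau[T_n > t] \le \sup_{x \in B_n} \PP_x^\tau[T_n > t]$, which is the desired bound.

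There is no substantial obstacle here; the only point worth being careful about is justifying the two inputs rigorously. The probabilistic representation of $P_t^n$ follows from the fact that $(X_t)$ is a continuous-time Markov process on the finite state space $\Z^d$ (or rather, after restricting the matrix, $B_n$ with a cemetery), so that $(P_t^n)$ is simply the matrix exponential of $\L_n$ and the Kolmogorov backward equation identifies it with the killed expectation above. The existence of a nonnegative principal eigenfunction follows from standard Perron--Frobenius theory applied to the finite, irreducible, nonnegative matrix $e^{t\L_n}$ for any fixed $t > 0$.
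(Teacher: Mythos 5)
Your proof is correct and follows essentially the same route as the paper: the second inequality is Markov's inequality, and the first uses the identity $\EE_x^\tau[\varphi_n(X_t)\1_{\{T_n>t\}}]=e^{-t\lambda_n}\varphi_n(x)$ for the principal eigenfunction of the killed semigroup, evaluated at a point where $\varphi_n$ attains its maximum. The paper simply normalizes $\sup\psi_n=1$ instead of carrying the factor $\|\varphi_n\|_\infty$, and leaves the semigroup representation and positivity of the eigenfunction implicit, which you spell out.
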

\begin{proof}
Let $\psi_n$ be the eigenfunction associated with the principal eigenvalue $\lambda_n$ such that $\sup \psi_n = 1$.
$$
\EE_x^\tau[\psi_n(X_t) \1_{\{T_n > t\}}] = e^{-t \lambda_n} \psi_n(x).
$$
Choosing $x \in B_n$ such that $\psi_n(x) = 1$, we have~:
$$
\PP_x^\tau[T_n > t] \ge \EE_x^\tau[\psi_n(X_t) \1_{\{T_n > t\}}] = e^{-t \lambda_n} .
$$
The second inequality is Markov's inequality.
\end{proof}
Our objective is to find a sharp upper bound for $\sup_{x \in B_n} \EE_x^\tau[T_n]$. As noted in inequality (\ref{lambdalambda0}), finding a lower bound for $\lambda_n^\circ$ is sufficient. Therefore, we assume in this section that $\mathbf{a=0}$.

We introduce the embedded discrete time random walk $(Y_n)_{n \in \N}$, and the jump instants $(J_n)_{n \in \N}$, so that
$$
J_n \le t < J_{n+1} \quad \Rightarrow \quad X_t = Y_n .
$$
Recalling that we assumed here $a=0$, it is clear that conditionally on $Y_n = x$, the time $J_{n+1}-J_{n}$ spent by the walk at site $x$ is an exponential variable of mean $\tau_x$. Let $G_n(x,y)$ be the number of visits before exiting $B_n$ at site $y$ for the walk $Y$ starting at $x$~: 
$$
\hat{T}_n = \inf\{k : Y_k \notin B_n \} \quad \text{ and } \quad G_n(x,y) = \EE_x^\tau \left[ \sum_{k=0}^{\hat{T}_n-1} \1_{\{Y_k = y\}} \right].
$$
Note that $G_n(x,y)$, as the expectation of a functional of $Y$, is non-random.
As a consequence of the above remark, the expected total time spent by the walk $X$ at site $x$ before exiting $B_n$ is $\tau_x$ times the number of visits of $Y$ at site $x$. In other words~:
\begin{equation}
\label{TleG}
\EE_x^\tau[T_n] = \sum_{y \in B_n} G_n(x,y) \tau_y.
\end{equation}
Roughly speaking, we will see that the expectation of this sum behaves like $n^2$ (assuming $\alpha > 1$), and that the probability to be far from the expectation by $n^{d/\alpha}$ is of order $n^{-d}$. To estimate theses fluctuations, our method will be to compute moments after a truncation and centring of the $\tau_x$.
%
%
%
%
%
%
%
To do so, the first thing we need is to find convenient upper bounds for $G_n(\cdot,\cdot)$.

\begin{prop}
\label{Gn}
\begin{enumerate}
\item
\label{Gn2}
There exists $C_1 > 0$ such that for any integer $n$~:
\begin{equation*}
\sum_{y \in B_n} G_n(0,y) \le C_1 n^2.
\end{equation*}
\item
\label{green}
If $d \ge 3$, then there exists $C_2 > 0$ such that for any integer $n$ and any $x \in \Z^d$~:
$$
G_n(0,x) \le \frac{C_2}{(1+\|x\|)^{d-2}}.
$$
\item
\label{Gn1}
If $d = 2$, then there exists $C_3 > 0$ such that for any integer $n$ and any $x \in \Z^d$~:
\begin{equation*}
G_n(0,x) \le C_3 \ln(n).
\end{equation*}
\end{enumerate}
\end{prop}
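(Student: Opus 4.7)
The plan is to exploit the fact that, when $a=0$, the embedded discrete-time walk $Y$ is simply the nearest-neighbour simple random walk (SRW) on $\Z^d$, so $G_n$ is nothing more than the Green function of SRW killed upon exiting $B_n$. All three bounds then reduce to classical SRW estimates, two of which I will even be able to pull off by re-using material already developed in the previous section.

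For part (\ref{Gn2}), the key remark is that $\sum_{y \in B_n} G_n(0,y) = \EE_0^\tau[\hat T_n]$. I would use the standard martingale $M_k := \|Y_k\|_2^2 - k$, which follows from $\EE[Y_{k+1}-Y_k \mid Y_k] = 0$ and $\EE[\|Y_{k+1}-Y_k\|_2^2 \mid Y_k] = 1$. Since $\hat T_n$ has finite expectation (easily seen by comparing with a geometric number of independent excursions), optional stopping gives
$$
\EE_0^\tau[\hat T_n] = \EE_0^\tau\bigl[\|Y_{\hat T_n}\|_2^2\bigr] \le d(n+1)^2,
$$
as $Y_{\hat T_n} \in \partial B_n$ has $\ell^\infty$-norm equal to $n+1$.

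For part (\ref{green}), the trivial domination $G_n(0,x) \le G(0,x)$ by the full-lattice Green function $G(0,x) = \sum_{k \ge 0} \PP_0^\tau[Y_k = x]$ reduces the claim to the classical estimate $G(0,x) \le C(1+\|x\|)^{2-d}$, valid for $d \ge 3$. This is obtained by summing the Gaussian upper bound $\PP_0^\tau[Y_k = x] \le Ck^{-d/2}\exp(-c\|x\|_2^2/k)$ and splitting the sum at $k \sim \|x\|^2$. For part (\ref{Gn1}), since $Y$ is reversible with respect to the counting measure, $G_n$ is symmetric, and the strong Markov property applied at the first visit of $Y$ to $0$ gives
$$
G_n(0,x) = G_n(x,0) = \PP_x^\tau\bigl[Y \text{ hits } 0 \text{ before } \hat T_n\bigr] \cdot G_n(0,0) \le G_n(0,0),
$$
so it suffices to show $G_n(0,0) \le C \ln n$. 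For this I would use the electrical network interpretation: if $q_n$ denotes the probability that $Y$, started at $0$, exits $B_n$ before returning to $0$, then the number of visits to $0$ is geometric of parameter $q_n$, hence $G_n(0,0) = 1/q_n$. The standard escape-probability identity yields $q_n = C_n/(2d)$, where $C_n$ is the effective conductance between $0$ and $\partial B_n$ as defined in Section~\ref{s:lambda0}. The bound $C_n \ge k_1/\ln n$ already established in Proposition~\ref{Cn} then gives $G_n(0,0) \le (2d/k_1)\ln n$, as desired.

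The main technical obstacle lies in part (\ref{Gn1}): the two-dimensional logarithmic control demands a genuine random-walk input rather than a soft one-line bound. However, because Proposition~\ref{Cn} has already supplied the matching estimate on the effective conductance $C_n$ in $d=2$, the electrical network identity $G_n(0,0) = 2d/C_n$ lets me recycle that work directly, and no additional heat-kernel or potential-kernel machinery is needed.
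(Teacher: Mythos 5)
Your proposal is correct, and its interest lies in being largely self-contained where the paper simply cites the literature: the paper gets part (\ref{Gn2}) by comparing $\EE_0^\tau[\hat T_n]$ with the exit time of the first coordinate from $\{-n,\dots,n\}$ (Feller, vol.~I, XIV.3), and parts (\ref{green}) and (\ref{Gn1}) by invoking Lawler's Green-function estimates (Theorems 1.5.4 and 1.6.6). Your martingale argument with $\|Y_k\|_2^2-k$ and optional stopping for part (\ref{Gn2}) is a clean equivalent of the Feller citation. For part (\ref{green}) you are essentially re-deriving the cited classical bound via the domination $G_n\le G$ and Gaussian heat-kernel upper bounds; note that the Gaussian bound for the simple random walk is itself a nontrivial classical input, so this is on the same footing as citing Lawler rather than more elementary. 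The genuinely different step is part (\ref{Gn1}): using symmetry of the killed Green function, the strong Markov property to reduce to $G_n(0,0)$, the geometric count of returns giving $G_n(0,0)=1/q_n$, and the identity $q_n=C_n/(2d)$ with the effective conductance $C_n$ of Section~\ref{s:lambda0}, so that Proposition~\ref{Cn} (which precedes this statement, hence no circularity) yields $G_n(0,0)\le (2d/k_1)\ln n$. This avoids the two-dimensional potential-kernel machinery entirely and has the pleasant feature of tying the $d=2$ logarithm to the same conductance estimate that already governs $\lambda_n^\circ$; it even gives the slightly stronger statement $G_n(0,x)\le 2d/C_n$ uniformly in $x$. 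The only (shared with the paper) caveat is that the stated bounds must be read for $n\ge 2$, or with constants adjusted, since $\ln(1)=0$ and $C_1 n^2$ vanishes at $n=0$.
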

\begin{proof}
For the first part, note that 
$$
\sum_{y \in B_n} G_n(0,y) = \EE_0^\tau \left[ \sum_{k=0}^{\hat{T}_n-1} \1_{\{Y_k \in B_n\}} \right] = \EE_0^\tau[\hat{T}_n].
$$
As given for instance by \cite[Section XIV.3]{fel1}), the expectation of the exit time of the first coordinate of $Y$ from $\{-n,\ldots,n\}$ is bounded by a constant times $n^2$. It is clear that this quantity is an upper bound for $\EE_0^\tau[\hat{T}_n]$.
The second inequality is a consequence of \cite[Theorem 1.5.4]{law}, while the last comes from \cite[Theorem 1.6.6]{law}. 
\end{proof}
We begin by cutting and centring the random variables $(\tau_x)$. Let $\alpha' < \alpha$ (remember that $\E[\tau_0^{\alpha'}]$ is finite). For technical reasons we impose on $\alpha'$ the additional condition
\begin{equation}
\label{restrictalpha'}
d \le 3 \quad \Rightarrow \quad \alpha' \le 2.
\end{equation}
As we will see in the proof of Theorem~\ref{green3}, this restriction is of no consequence for our purpose.
We define the following truncation of $\tau_x$~:
\begin{equation*}
\td{\tau}_{x,n} = \left|
 \begin{array}{ll}
\tau_x & \text{if } \tau_x \le n^{d/\alpha'}, \\
0 & \text{otherwise}
\end{array}
\right.
\end{equation*}
(observe that with high probability, we have $\tau_x = \td{\tau}_{x,n}$ for every $x \in B_n$), and let $\ov{\tau}_{x,n} = \td{\tau}_{x,n} - \E[\td{\tau}_{x,n}]$. 

We proceed to show the following proposition, that roughly speaking states that fluctuations of order $n^{d/\alpha'}$ of the exit time from $0$ occur with probability smaller than $n^{-d}$.

\begin{prop}
\label{controlfluct}
For any $\beta > d/\alpha'$, there exist $\delta, C > 0$ such that for all $n$~:
\begin{equation*}
\P\left[ \left| \sum_{x \in B_n} G_n(0,x) \ov{\tau}_{x,n} \right| > n^\beta \right] \le \frac{C}{n^{d+\delta}} .
\end{equation*}
\end{prop}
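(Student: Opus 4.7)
The sum $S_n := \sum_{x \in B_n} G_n(0,x)\, \ov{\tau}_{x,n}$ is a linear combination of i.i.d.\ centred random variables $\ov{\tau}_{x,n}$ with deterministic weights $g(x) := G_n(0,x)$ (recall that for $a = 0$ the embedded walk $Y$ is the simple random walk, so $G_n$ does not depend on $\tau$). The key input is a moment estimate on $\ov{\tau}_{x,n}$: on the support of $\td{\tau}_{x,n}$ the bound $\tau_x^p \le \tau_x^{\alpha'}(n^{d/\alpha'})^{p-\alpha'}$ holds for every $p \ge \alpha'$, so combining with $\E[\tau_0^{\alpha'}] < \infty$ (valid since $\alpha' < \alpha$),
$$
\E\bigl[|\ov{\tau}_{x,n}|^p\bigr] \le C_p\, n^{d(p - \alpha')/\alpha'} \qquad (p \ge \alpha').
$$

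I plan to apply Markov's inequality with a high even moment, $\P[|S_n| > n^\beta] \le n^{-2k\beta}\, \E[S_n^{2k}]$, and then to bound $\E[S_n^{2k}]$ via Rosenthal's inequality,
$$
\E[S_n^{2k}] \le B_k\!\left[\Bigl(\sum_{x \in B_n} g(x)^2\, \E[\ov{\tau}_{x,n}^2]\Bigr)^k + \sum_{x \in B_n} g(x)^{2k}\, \E[\ov{\tau}_{x,n}^{2k}]\right].
$$
(Equivalently, one can expand $S_n^{2k}$ and group the monomials according to the set partition of $\{1,\dots,2k\}$ induced by the indices; only partitions with all blocks of size $\ge 2$ contribute by independence and centring.) Proposition~\ref{Gn} gives precisely the geometric bounds needed: $\max_x g(x) = O(\log n)$ in $d = 2$ and $O(1)$ in $d \ge 3$, and $\sum_x g(x)^2$ is $O(n^2\log^2 n)$, $O(n)$, $O(\log n)$, $O(1)$ in dimensions $2,3,4,\ge 5$ respectively, with higher powers handled through $\sum g^{2k} \le (\max g)^{2k-2}\sum g^2$. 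Feeding in the moment estimate, a short case-by-case computation shows that both Rosenthal terms are bounded by
$$
C_k\, n^{2kd/\alpha' + c_d}\, (\log n)^{O(k)},
$$
where $c_d \le 0$ depends only on $d$ (with $c_d = 0$ in $d=2$ and $c_d < 0$ in $d \ge 3$).

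Inserting this into Markov's inequality gives $\P[|S_n| > n^\beta] \le C_k\, n^{c_d - 2k(\beta - d/\alpha')}\, (\log n)^{O(k)}$, and because $\beta - d/\alpha' > 0$ strictly, the coefficient of $k$ in the exponent is negative; choosing $k$ large enough, the exponent becomes smaller than $-(d+\delta)$ for some $\delta > 0$, as required. I expect the main delicacy to be the dimensional case analysis: the hypothesis (\ref{restrictalpha'}) forcing $\alpha' \le 2$ when $d \le 3$ is exactly what makes the first Rosenthal term scale like $n^{2kd/\alpha'}$ in low dimensions, since it forces $\E[\ov{\tau}^2]$ to keep growing with $n$ fast enough to absorb the large factor $\sum g(x)^2$. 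Without this restriction, $\E[\ov{\tau}^2]$ would saturate at $O(1)$ in dimensions $2$ and $3$ and the resulting shortfall could not be recovered from $\beta > d/\alpha'$ alone.
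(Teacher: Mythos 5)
Your proof is correct and is essentially the paper's argument: truncate and centre, bound a high even moment of $\sum_{x \in B_n} G_n(0,x)\,\ov{\tau}_{x,n}$ (with deterministic weights, since $a=0$) via the Green-function estimates of Proposition \ref{Gn} and the truncated moment bound (\ref{momtau}), then conclude by Markov's inequality, where $\beta > d/\alpha'$ guarantees that a sufficiently high moment beats $n^{-(d+\delta)}$. The only difference is presentational: you package the multiplicity expansion as Rosenthal's inequality (keeping just the two extreme terms) and take one large $k$ uniformly in all dimensions, whereas the paper expands the $2m$-th moment over multiplicities $e_1+\cdots+e_k = 2m$ and tunes $m$ dimension by dimension; your reading of why the restriction $\alpha' \le 2$ for $d \le 3$ is needed also matches the paper's use of it.
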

\begin{proof}
Let $m$ be an integer. We have~:
\begin{equation}
\label{moments}
\begin{split}
& \E\left[\left( \sum_{x \in B_n} G_n(0,x) \ov{\tau}_{x,n}  \right)^{2m}\right]  \\
& \qquad =  \sum_{x_1,\ldots, x_{2m}} G_n(0,x_1) \cdots G_n(0,x_{2m}) \E[\ov{\tau}_{x_1,n} \cdots \ov{\tau}_{x_{2m},n} ] \\
& \qquad = \sum_{k=1}^m \sum_{\substack{e_1+\cdots+e_k = 2m \\ e_i \ge 2}} C_{e_1, \ldots, e_k} \sum_{\substack{y_1,\ldots, y_k \\ y_i \neq y_j}} \prod_{i=1}^k G_n(0,y_i)^{e_i} \E[\ov{\tau}_{y_i,n}^{e_i}] \\
& \qquad  \le C(m) \sum_{k=1}^m \sum_{\substack{e_1+\cdots+e_k = 2m \\ e_i \ge 2}} \underbrace{\prod_{i=1}^k \sum_{x \in B_n} G_n(0,x)^{e_i} |\E[\ov{\tau}^{e_i}_{0,n}]|}_{=: \Pi_{e_1,\ldots,e_k}^n} ,
\end{split}
\end{equation}
where, to get the second equality, we chose to decompose $x_1, \ldots, x_{2m}$ the following way~: let $k$ be the cardinal of $\{x_1,\ldots,x_{2m}\}$. We have $\{x_1,\ldots,x_{2m}\} = \{y_1, \ldots, y_k\}$. Then $e_i$ represents then number of occurrences of $y_i$ in $x_1,\ldots,x_{2m}$. We then use the fact that the random variables $(\ov{\tau}_{x,n})_{x \in \Z^d}$ are independent to split the expectation in product form. Note that as $\ov{\tau}_{x,n}$ is a centred random variable, the cases when $e_i=1$ for some $i$ do not contribute to the sum, so it is enough to consider cases when $e_i \ge 2$ (and this implies $k \le m$). It is a nice combinatorics exercise to check that $C_{e_1,\ldots,e_k}$ is the multinomial coefficient associated with $(e_1,\ldots,e_k)$ divided by $k !$, but the important fact is that this term does not depend on $n$.


We will now determine the asymptotic behaviour of the $\Pi_{e_1,\ldots,e_k}^n$. If $d \ge 3$, using part~(\ref{green}) of Proposition~\ref{Gn}, one knows that 
$$
\sum_{x \in B_n} G_n(0,x)^{e_i} \le C \sum_{x \in B_n} {(1+\|x\|)^{- e_i(d-2)}},
$$ 
which, by comparison with an integral, is bounded by~:
$$
\left|
\begin{array}{ll}
C \ln(n) & \text{if } d \ge 4 \text{ or } e_i \ge 3  ,\\
C n & \text{if } d \ge 3.
\end{array}
\right.
$$
On the other hand, $|\E[\ov{\tau}_{0,n}^{e_i}]|$ is bounded when $n$ goes to infinity if $e_i \le \alpha'$, and otherwise 
\begin{equation}
\label{momtau}
|\E[\ov{\tau}_{0,n}^{e_i}]| \le \E[|\ov{\tau}_{0,n}|^{(e_i - \alpha') + \alpha'}] \le  (n^{d/\alpha'})^{e_i-\alpha'} \E[|\ov{\tau}_{0,n}|^{\alpha'}] \le C n^{e_i d / \alpha' - d}.
\end{equation}
We first treat the case $d \ge 4$. We choose $m$ as the smallest integer larger than (or equal to) $\alpha'/2$. All the $\Pi_{e_1,\ldots,e_k}^n$ are bounded by $C \ln(n)^m$ when $n$ goes to infinity except~:
$$
\Pi_{2m}^n \le C \ln(n) n^{2 m d / \alpha' - d }.
$$
It comes, using Markov's inequality, that there exists $C$ such that for any $n$~:
$$
\P\left[\left|\sum_{x \in B_n} G_n(0,x) \ov{\tau}_{x,n} \right| > n^\beta\right] \le C  n^{-d} \ln(n)^m n^{2 m (d / \alpha' - \beta)},
$$
which proves the desired result. 

We go on with the case when $d = 3$ and $\alpha' \le 2$ (see (\ref{restrictalpha'})). We choose $m=2$ in (\ref{moments}) and get~:
$$
\Pi_{2,2}^n \le C n^2 n^{12/\alpha'-6} \quad \text{ and } \quad \Pi_4^n \le C \ln(n) n^{12/\alpha' - 3},
$$
and it comes that~:
$$
\P\left[\left|\sum_{x \in B_n} G_n(0,x) \ov{\tau}_x \right| > n^\beta\right] \le C n^{-3} \ln(n)  n^{4 (3 / \alpha' - \beta)},
$$
which proves the proposition, and we are left with the two-dimensional case. From the estimates of Proposition~\ref{Gn}, we know that
$$
\sum_{x \in B_n} G_n(0,x)^{e_i} \le (C_3 \ln(n))^{e_i-1} \sum_{x \in B_n} G_n(0,x) \le C \ln(n)^{e_i} n^2,
$$
from which we obtain that, provided $e_1+\cdots+e_k = 2m$~:
$$
\Pi_{e_1,\ldots,e_k}^n \le C \ln(n)^{2m} n^{2k} \prod_{i=1}^k |\E[\ov{\tau}^{e_i}_{0,n}]|.
$$
Recalling that (from equation (\ref{momtau}) and the fact that $\alpha' \le 2$),
$$
|\E[\ov{\tau}_{0,n}^{e_i}]| \le C n^{2 e_i / \alpha' - 2},
$$
we obtain, for any sequence $e_1,\ldots,e_k$ such that $e_1+\cdots+e_k = 2m$~:
$$
\Pi_{e_1,\ldots,e_k}^n \le C  \ln(n)^{2m} n^{4m/\alpha'}.
$$
Now we choose $m$ large enough so that~:
$$
\left(\frac{4}{\alpha'} - 2 \beta \right)  m < -2
$$
and apply Markov's inequality.
\end{proof}
The next step is to lift this estimate to the sum of $G_n(0,x) \td{\tau}_{x,n}$.
%
%
%
%
%
\begin{prop}
\label{lift}
There exists $M$ such that for any $\beta > d/\alpha'$, there exist $\delta, C > 0$ such that for all $n$~:
$$
\P\left[ \sum_{x \in B_n} G_n(0,x) \td{\tau}_{x,n} > M n^2 + n^\beta  \right] \le \frac{C}{n^{d+\delta}}.
$$
\end{prop}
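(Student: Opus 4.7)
The plan is to decompose the sum using the centring built into the definition of $\ov{\tau}_{x,n}$. Since $\td{\tau}_{x,n} = \ov{\tau}_{x,n} + \E[\td{\tau}_{x,n}]$, we have
$$
\sum_{x \in B_n} G_n(0,x) \td{\tau}_{x,n} = \sum_{x \in B_n} G_n(0,x) \ov{\tau}_{x,n} + \sum_{x \in B_n} G_n(0,x) \E[\td{\tau}_{x,n}].
$$
The first (random) term is controlled directly by Proposition~\ref{controlfluct}. The second term is deterministic, so the task reduces to showing that it is bounded by $M n^2$ for some constant $M$ independent of $n$.

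Since this section operates under the standing assumption $\alpha > 1$, we have $\E[\tau_0] < + \infty$. Because $\td{\tau}_{x,n} \le \tau_x$ pointwise, it follows that $\E[\td{\tau}_{x,n}] \le \E[\tau_0]$ uniformly in $n$ and $x$. Combining this with part~(\ref{Gn2}) of Proposition~\ref{Gn},
$$
\sum_{x \in B_n} G_n(0,x) \E[\td{\tau}_{x,n}] \le \E[\tau_0] \sum_{x \in B_n} G_n(0,x) \le C_1 \E[\tau_0] \, n^2,
$$
so we may take $M := C_1 \E[\tau_0]$. With this choice, the event that the full sum exceeds $M n^2 + n^\beta$ forces the fluctuation part to exceed $n^\beta$ in absolute value, and Proposition~\ref{controlfluct} yields the claimed bound $C n^{-(d+\delta)}$ for each $\beta > d/\alpha'$.

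There is essentially no new obstacle here: the delicate moment computation was done in Proposition~\ref{controlfluct}, and the present statement is a clean bookkeeping step that converts a tail bound on the centred sum into a one-sided tail bound on the truncated sum. The only conceptual point worth flagging is why the deterministic drift is of order $n^2$ (and not larger): this relies on both $\E[\tau_0] < + \infty$, which uses $\alpha > 1$, and the $O(n^2)$ estimate on $\sum_{x \in B_n} G_n(0,x)$, which is exactly the expected exit time of the underlying simple random walk from $B_n$.
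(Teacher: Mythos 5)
Your proof is correct and follows exactly the paper's argument: bound the deterministic part $\sum_{x} G_n(0,x)\,\E[\td{\tau}_{x,n}] \le C_1 \E[\tau_0]\, n^2$ using $\E[\td{\tau}_{x,n}] \le \E[\tau_0]$ (finite since $\alpha > 1$ in this section) together with part~(\ref{Gn2}) of Proposition~\ref{Gn}, then control the centred fluctuation term by Proposition~\ref{controlfluct}. Nothing to add.
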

\begin{proof}
Note that as $\E[\td{\tau}_{x,n}] \le \E[\tau_0]$, and using part~(\ref{Gn2}) of Proposition~\ref{Gn}~:
$$
\sum_{x \in B_n} G_n(0,x) \E[\td{\tau}_{x,n}] \le C_1 \E[\tau_0] n^2.
$$
It comes that
\begin{equation*}
\P\left[ \sum_{x \in B_n} G_n(0,x) \td{\tau}_{x,n} > C_1 \E[\tau_0] n^2 + n^\beta   \right] \le \P\left[ \sum_{x \in B_n} G_n(0,x) \ov{\tau}_{x,n} >  n^\beta  \right]  ,
\end{equation*}
on which we apply Proposition \ref{controlfluct}. 
\end{proof}
%
%
%
%
%
We can now carry this result back to $\sup_{x \in B_n} \EE_x^\tau[T_n]$.
\begin{prop}
\label{tpssortie}
There exists $M'$ such that for any $\beta > d/\alpha'$, almost every environment and $n$ large enough~:
$$
\sup_{x \in B_n} \EE_x^\tau [T_n] \le n^\beta + M' n^2.
$$
\end{prop}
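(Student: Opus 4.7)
The strategy is to reduce the supremum over $x\in B_n$ to the $x=0$ case by translating the environment and enlarging the box to $B_{2n}$, apply Proposition~\ref{lift} together with a union bound over starting points, and finally remove the truncation.

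For $x\in B_n$, starting the walk at $x$ under $\tau$ is distributionally the same as starting at $0$ under the shifted environment $\tau(\cdot+x)$, with exit from the translated box $B_n-x\subseteq B_{2n}$. Monotonicity of the exit time in the box together with (\ref{TleG}) give
$$
\EE_x^\tau[T_n]\le\sum_{y\in B_{2n}}G_{2n}(0,y)\,\tau_{y+x}.
$$
Fix $\beta'\in(d/\alpha',\beta)$. By translation invariance of $\P$, Proposition~\ref{lift} (applied with $2n$ in place of $n$ to the shifted environment) gives, uniformly in $x\in B_n$,
$$
\P\Bigl[\sum_{y\in B_{2n}}G_{2n}(0,y)\,\td{\tau}_{y+x,2n}>M(2n)^2+(2n)^{\beta'}\Bigr]\le\frac{C}{(2n)^{d+\delta}}.
$$
A union bound over the $|B_n|=O(n^d)$ starting points produces a bad-event probability of order $n^{-\delta}$. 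Revisiting the proof of Proposition~\ref{controlfluct}, the moment index $m$ is a free parameter and one can arrange $\delta>1$ by choosing $m$ sufficiently large; the first Borel--Cantelli lemma then yields, almost surely for $n$ large,
$$
\sup_{x\in B_n}\sum_{y\in B_{2n}}G_{2n}(0,y)\,\td{\tau}_{y+x,2n}\le 4Mn^2+2^{\beta'}n^{\beta'}\le M'n^2+n^\beta,
$$
with $M'=4M$, since $\beta'<\beta$ absorbs the constant $2^{\beta'}$ for $n$ large.

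To remove the truncation one needs $\max_{z\in B_{3n}}\tau_z\le(2n)^{d/\alpha'}$ almost surely for $n$ large enough. The union bound $|B_{3n}|\,F((2n)^{d/\alpha'})=O(n^{d(1-\alpha/\alpha')+o(1)})$ is polynomially decaying since $\alpha'<\alpha$, but need not be summable for all admissible $\alpha'$; summability along a geometric subsequence $n_k=2^k$ is immediate, however, and the extension to all $n$ via monotonicity of $\max_{B_N}\tau$ in $N$ costs only a multiplicative constant, which can be absorbed by enlarging the truncation level (a modification that only perturbs the implicit constants in Proposition~\ref{lift}). The main, essentially bookkeeping, obstacle is precisely this reconciliation of the subsequence estimate with the exact truncation level appearing in the statement; the conceptual work lies in Proposition~\ref{lift}, and the present result follows by shifting, union-bounding, and patching.
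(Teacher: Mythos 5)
Your argument is correct and follows the same strategy as the paper: reduce $\EE_x^\tau[T_n]$ to the origin by shifting the environment and enlarging the box to $x+B_{2n}$, apply Proposition~\ref{lift} with a union bound over the $O(n^d)$ starting points, and remove the truncation by controlling $\max_{B_{3n}}\tau$. The one genuine difference is organizational: the paper groups the scales dyadically (bounding all $n\in[2^j,2^{j+1})$ by the single scale $2^{j+2}$), so that the union bound costs $2^{jd}\cdot 2^{-j(d+\delta)}=2^{-j\delta}$ and \emph{any} $\delta>0$ from Proposition~\ref{lift} suffices, whereas your direct Borel--Cantelli over all $n$ needs $\delta>1$ and hence a strengthening of Proposition~\ref{controlfluct} beyond its statement. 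Your claim that this strengthening is available by taking $m$ large is true, but it is not quite ``free'': for $d=2$ it follows at once from the paper's condition $(4/\alpha'-2\beta)m<-2$, while for $d=3$ and $d\ge4$ the paper's proof deliberately fixes a small $m$ precisely so that all mixed terms $\Pi^n_{e_1,\ldots,e_k}$ with $k\ge 2$ involve only bounded moments; with larger $m$ one must redo that bookkeeping (it works, since for any configuration with some $e_i>\alpha'$ one still gets $\Pi^n_{e_1,\ldots,e_k}\le C\,n^{o(1)}\,n^{2md/\alpha'-d}$ at worst, which beats $n^{2m\beta}$ by $n^{d+2m(\beta-d/\alpha')}$), so this step deserves a proof rather than an assertion. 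Note also that your truncation-removal patch reintroduces a dyadic subsequence plus an enlarged truncation constant anyway (correctly: only constants in (\ref{momtau}) and Proposition~\ref{lift} are affected), which is essentially the paper's $A_j$ term; running the whole argument at dyadic scales, as the paper does, handles both the union bound and the truncation in one pass and with the propositions exactly as stated.
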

\begin{proof}
We first need to relate $\EE_x^\tau[T_n]$ with the estimates proved before (which concern only $\EE_0^\tau[T_n]$). Let $T_n^x$ be the exit time from $x+B_n$. Since for any $x \in B_n$, we have $B_n \subseteq x+B_{2n}$, it comes that almost surely $T_n \le T_{2n}^x$, so $\EE_x^\tau [T_n] \le \EE_x^\tau [T_{2n}^x]$, the latter having same law as $\EE_0^\tau [T_{2n}]$ under $\P$.

Let $M'>0$ and let $i$ be an integer. We consider~:
\begin{equation}
\label{decompose}
\P\left[ \sup_{n \ge 2^i} \frac{\sup_{x \in B_n} \EE_x^\tau [T_n]}{n^\beta + M' n^2} > 1  \right] 
\le \sum_{j=i}^\infty \P\left[ \sup_{2^j \le n < 2^{j+1}} \frac{\sup_{x \in B_n} \EE_x^\tau [T_{2n}^x]}{n^\beta + M' n^2} > 1  \right]  .
\end{equation}
We bound the general term of this series by 
$$
\P\left[\sup_{x \in B_{2^{j+1}}} \EE_x^\tau [T_{2^{j+2}}^x] > 2^{j\beta} + M' 2^{2j} \right] ,
$$
which we bound by $A_j+|B_{2^{j+1}}| A'_j$, where~:
\begin{equation}
\label{Aj}
A_j = \P\left[\exists x \in B_{2^{j+2}} : \tau_x > 2^{(j+2) d/\alpha'} \right],
\end{equation}
$$
A'_j =  \P\left[ \sum_{x \in B_{2^{j+2}}} G_n(0,x) \td{\tau}_{x,2^{j+2}} > 2^{j\beta} + M' 2^{2j} \right] .
$$
We first estimate $A_j$. Take $\alpha''$ such that $\alpha' < \alpha'' < \alpha$. It comes from assumption~1' (see (\ref{ppF})) that for all $y$ large enough~:
$$
\P[\tau_0 > y] \le y^{-\alpha''}.
$$
One gets that for $j$ large enough~:
$$
A_j \le 1 - \left( 1- 2^{-j d\alpha''/\alpha'}\right)^{|B_{2^{j+2}}|} = 1- \exp\left(|B_{2^{j+2}}| 2^{-j d\alpha''/\alpha'} (1+o(1))\right),
$$
which is the general term of a convergent series. 

Now for $A'_j$, using Proposition \ref{lift}, we see that choosing $M' = 16M$, the term $|B_{2^{j+1}}|A'_j$ is bounded by $C 2^{-j\delta}$ for some $\delta > 0$. Therefore, the series in the right-hand side of \ref{decompose} converges (and tends to $0$ when $i$ goes to infinity), which proves the proposition.
\end{proof}
%
%
%
%
%
%
We can now conclude~:
\begin{thm}
\label{green3}
\begin{enumerate}
\item 
\label{green3a}
If $d/\alpha \ge 2$, then for almost every environment~:
$$
\limsup_{n \to \infty} - \frac{\ln(\lambda_n)}{\ln(n)} \le \frac{d}{\alpha}.
$$
\item 
\label{supn2}
If $d/\alpha < 2$, then there exists $C$ such that for almost every environment and all $n$ large enough~:
$$
\lambda_n \ge \frac{C}{n^2}.
$$
\end{enumerate}
\end{thm}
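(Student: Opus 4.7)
The plan is to translate the exit-time bound of Proposition~\ref{tpssortie} into a lower bound on $\lambda_n$ via Proposition~\ref{sortie}, then tune the free parameters $\alpha'$ and $\beta$. Thanks to inequality~(\ref{lambdalambda0}) I may assume $a=0$ throughout and so use the results of this section directly.

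The first step is a general observation. Setting $S_n := \sup_{x \in B_n} \EE_x^\tau[T_n]$ and applying Proposition~\ref{sortie} with $t = 2 S_n$, the right-hand inequality gives $\sup_x \PP_x^\tau[T_n > t] \le 1/2$, while the left-hand one then yields $e^{-t \lambda_n} \le 1/2$, i.e.
$$
\lambda_n \ge \frac{\ln 2}{2 S_n}.
$$
Feeding in Proposition~\ref{tpssortie}, for every admissible pair $(\alpha',\beta)$ (that is, $\alpha' < \alpha$, with $\alpha' \le 2$ when $d \le 3$, and $\beta > d/\alpha'$) I obtain, almost surely for $n$ large enough,
$$
\lambda_n \ge \frac{\ln 2}{2\,(n^\beta + M' n^2)}.
$$

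Part~(\ref{supn2}) should then follow at once: $d/\alpha < 2$ means $\alpha > d/2$, so one can pick $\alpha' \in (d/2,\alpha)$ (taking $\alpha' \le 2$ as well in low dimension, which is compatible since $d/2 < 2$ whenever $d \le 3$); then $d/\alpha' < 2$ and any $\beta \in (d/\alpha', 2)$ makes $n^\beta = o(n^2)$, yielding $\lambda_n \ge C/n^2$ with a deterministic constant $C$. For part~(\ref{green3a}), given any $\beta_0 > d/\alpha$, I would choose $\alpha' < \alpha$ close enough to $\alpha$ that $d/\alpha' < \beta_0$; when $d \le 3$ the hypothesis $d/\alpha \ge 2$ forces $\alpha \le d/2 \le 2$, so~(\ref{restrictalpha'}) is automatic. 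Setting $\beta = \beta_0 \ge 2$, the $n^\beta$ term dominates $M' n^2$, so $\lambda_n \ge c/n^{\beta_0}$ eventually, whence $\limsup_{n \to \infty} -\ln(\lambda_n)/\ln(n) \le \beta_0$; letting $\beta_0 \downarrow d/\alpha$ yields the claim.

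No real obstacle remains, since Propositions~\ref{sortie} and~\ref{tpssortie} already do the hard work. The only delicate bookkeeping is checking that the auxiliary restriction~(\ref{restrictalpha'}) never prevents the choice of $\alpha'$ in the narrow window $(d/\alpha, \beta_0 \cdot d /\!$-something$)$ required for each case, which it does not.
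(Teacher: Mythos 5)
Your proposal is correct and follows essentially the same route as the paper: reduce to $a=0$ via (\ref{lambdalambda0}), combine Proposition~\ref{sortie} with the exit-time bound of Proposition~\ref{tpssortie}, and tune $\alpha'$ and $\beta$, checking that the restriction~(\ref{restrictalpha'}) is harmless (for $d\in\{2,3\}$ with $\alpha>2$ one takes $\alpha'=2$, which only arises in part~(\ref{supn2}) since then $d/\alpha<2$). The only difference is that you make the passage from the exit-time bound to $\lambda_n \ge \ln 2/(2S_n)$ explicit via the choice $t=2S_n$, a detail the paper leaves implicit.
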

\begin{proof}
Due to inequality~(\ref{lambdalambda0}), it is enough to show these results for $\lambda_n^\circ$. If $d \ge 4$ or $\alpha \le 2$, it is a consequence of Proposition \ref{tpssortie} together with Proposition \ref{sortie} (making $\alpha'$ tend to $\alpha$). Now if $d \in \{2, 3\}$ and $\alpha > 2$, then we can choose $\alpha' = 2$, in which case $d/2 < 2$, and part (\ref{supn2}) of the theorem still holds.
\end{proof}
%
%
%
%
%
%
\section{Upper bounds on $\lambda_n$}
\label{s:upper}
\setcounter{equation}{0}
We now give upper bounds on $\lambda_n$. Our method is clear from equation (\ref{variational}), that we recall here~:
$$
\lambda_n = \inf_{\substack{f \in L^2(B_n) \\ f \neq 0}} \frac{\cE(f,f)}{(f,f)}.
$$
Picking a function in $L^2(B_n)$ gives an upper bound, and the problem is to choose the function well enough (i.e. looking more or less like the eigenfunction) to get a sharp bound.

\subsection{The one-dimensional case}
\begin{thm}
\label{upper1d}
We assume $d=1$. There exists $C > 0$ such that for almost every environment and all $n$ large enough~:
$$
\lambda_n \le \frac{C}{n \ \sum_{x \in B_{n/4}} \tau_x}.
$$
\end{thm}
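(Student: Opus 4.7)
The plan is to apply the variational formula~\eqref{variational} with an explicit test function $f \in L^2(B_n)$: take $f$ equal to $1$ on $B_{n/4}$, equal to $0$ off $B_n$, and on each transition interval ($T_R := \{n/4+1,\ldots,n\}$, and symmetrically $T_L$) equal to the weighted harmonic interpolation for the conductances $c_x := \tau_x^a \tau_{x+1}^a$. Concretely, the current $c_x (f(x+1)-f(x))$ is held constant in $x$ on each transition interval, with boundary values $f(\pm n/4) = 1$ and $f(\pm(n+1)) = 0$. For $a=0$ this reduces to a tent function, and in general $f$ is monotone between $1$ and $0$.

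Since $f \equiv 1$ on $B_{n/4}$, the denominator is immediately controlled:
$$(f,f) \ge \sum_{x \in B_{n/4}} \tau_x.$$
For the numerator, bonds inside $B_{n/4}$ or outside $B_n$ contribute zero, so $\cE(f,f)$ decomposes into the contributions from $T_R$ and $T_L$. The standard one-dimensional electrical-network identity for harmonic interpolation yields
$$\cE(f,f) = \frac{1}{R_R} + \frac{1}{R_L}, \qquad R_R := \sum_{x=n/4}^{n} \frac{1}{\tau_x^a \tau_{x+1}^a},$$
and analogously for $R_L$. Thus the task reduces to showing $R_R, R_L \ge c n$ almost surely for $n$ large.

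Because $\tau_x \ge 1$ we have $\tau_x^{-a} \in (0,1]$, so $(\tau_x^{-a}\tau_{x+1}^{-a})_{x \in \Z}$ is a bounded, stationary, ergodic sequence with strictly positive mean $\E[\tau_0^{-a}]^2$. Birkhoff's ergodic theorem therefore gives $R_R / n \to (3/4)\, \E[\tau_0^{-a}]^2 > 0$ almost surely, so $R_R \ge c n$ for $n$ large enough, and likewise for $R_L$. Combining $\cE(f,f) \le C/n$ with the bound on $(f,f)$ yields
$$\lambda_n \le \frac{\cE(f,f)}{(f,f)} \le \frac{C}{n \sum_{x \in B_{n/4}} \tau_x}.$$

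The only real design choice is the test function: a tent function would give $\cE(f,f) \lesssim n^{-2} \sum_{x \in T_R} c_x$, which becomes useless once $a>0$ because the conductances may have heavy tails. Replacing it by the weighted harmonic interpolation turns the Dirichlet form over each transition interval into the reciprocal of an effective resistance; since the resistances $1/c_x = \tau_x^{-a}\tau_{x+1}^{-a}$ are bounded, the ergodic theorem robustly supplies a lower bound of order $n$ without any moment assumption on $\tau$ itself. So there is no serious obstacle — the main point is simply identifying the right test function for general $a$.
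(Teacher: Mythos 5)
Your proof is correct, and it follows a genuinely different route from the paper's. The paper also starts from the variational formula with a test function that equals (roughly) a tent profile, but it handles general $a$ by flattening the profile across every edge touching a trap deeper than a fixed threshold $M$ (chosen with $\P[\tau_0>M]\le 1/8$), then uses the law of large numbers to ensure that at most a quarter of the sites in each half are deep, so the resulting function still exceeds $1/4$ on $B_{n/4}$ while each edge contributes at most $M^{2a}/n^2$, giving $\cE(f,f)\le 2M^{2a}/n$. You instead put a flat plateau equal to $1$ on $B_{n/4}$ and take the conductance-harmonic profile on the two transition intervals, so that the Dirichlet form becomes exactly $1/R_R+1/R_L$ with $R_R=\sum_x \tau_x^{-a}\tau_{x+1}^{-a}$, and you lower-bound these effective resistances by order $n$ via Birkhoff's theorem (or, just as well, the strong law applied separately to even and odd indices) for the bounded, strictly positive variables $\tau_x^{-a}\tau_{x+1}^{-a}$; note that Assumption 2 ($\tau\ge 1$) gives the boundedness, and the moving window $\{n/4,\dots,n\}$ is handled by differencing two ergodic averages. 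Your approach buys the optimal profile on the transition region and avoids any truncation threshold or counting of deep traps, at the cost of invoking the electrical-network identity and an ergodic-theorem step; the paper's construction is more hands-on and keeps every edge increment explicitly of size at most $1/n$, which makes the bound $\cE(f,f)\le C/n$ immediate. Both yield a deterministic constant $C$ and the almost-sure bound for $n$ large, as required.
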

\begin{proof}
For $a = 0$, a ``triangle function'' that takes the value $0$ on $-(n+1)$ and $(n+1)$, the value $1$ on $0$ and is piecewise linear would do well. But for general $a$, this function is not appropriate, and we will construct instead a function that looks like it, but is constant around deep traps.

Let $M > 0$ be such that $\P[\tau_0 > M] \le 1/8$. Because of the law of large numbers, one gets~:
$$
\frac{1}{n} \ | \{ k \in \{-n-1,\ldots, 0\} \ : \ \tau_k > M \} | \xrightarrow[n \to \infty]{a.s.} \frac{1}{8}.
$$
Almost surely, for $n$ large enough, the two following conditions are satisfied~:
\begin{equation}
\label{gauche}
| \{ k \in \{-n-1,\ldots, 0\} \ : \ \tau_k > M \} | \le \frac{n}{4},
\end{equation}
\begin{equation}
\label{droite}
| \{ k \in \{0 ,\ldots, n+1\} \ : \ \tau_k > M \} | \le \frac{n}{4}.
\end{equation}
Let us first construct the left part of our function~: let $l : -\N \to \R$ be such that $l(k) = 0$ for all $k < -n$, and for all $k \in \{-n, \ldots, 0\} $~:
$$
l(k)-l(k-1) = 
\left|
\begin{array}{ll}
0 & \text{if } \tau_{k-1} > M \text{ or } \tau_k > M, \\
1/n & \text{otherwise}.
\end{array}
\right.
$$
The function $l$ is made in such a way that for all $k$ for which it makes sense~:
\begin{equation}
\label{var}
\tau_k^a \tau_{k+1}^a (l(k+1)-l(k))^2 \le \frac{M^{2a}}{n^2}.
\end{equation}
Moreover, when (\ref{gauche}) is satisfied, there are at most half of the edges on which the function is constant, so $l(0) \ge 1/2$. In this case, and as for any $k$ we have $l(k) - l(k-1) \le 1/n$, it comes that $l(k) \ge 1/4$ when $k \ge -n/4$.

We define in the same way a right part $r : \N \to \R$ such that $r(k) = 0$ for all $k > n$, and for all $k \in \{n, \ldots, 0\} $~:
$$
r(k)-r(k+1) = 
\left|
\begin{array}{ll}
0 & \text{if } \tau_{k} > M \text{ or } \tau_{k+1} > M, \\
1/n & \text{otherwise}.
\end{array}
\right.
$$
The function $r$ satisfies the same small variation property as in (\ref{var}). Similarly, when (\ref{droite}) is satisfied, we have that $r(0) \ge 1/2$ and $r(k) \ge 1/4$ for all $k \le n/4$.

Now we connect the two parts $l$ and $r$ preserving this small variation property. Let $m = \min(l(0),r(0))$. We define $f : \Z \to \R$ by
$$
f(x) =
\left|
\begin{array}{ll}
\min(l(x),m) & \text{if } x < 0 ,\\
\min(r(x),m) & \text{otherwise}.
\end{array}
\right.
$$
We have therefore~:
$$
\cE(f, f) \le \frac{2 M^{2a}}{n}.
$$
On the other hand, for $n$ large enough, (\ref{gauche}) and (\ref{droite}) are satisfied, and in this case $m \ge 1/2$ and $f(k) \ge 1/4$ for all $k$ such that $-n/4 \le k \le n/4$. Thus~:
$$
(f,f) \ge \frac{1}{16} \sum_{-n/4 \le k \le n/4} \tau_k,
$$
and we finally obtain, for all $n$ large enough~:
$$
\lambda_n \le \frac{\cE(f,f)}{(f,f)} \le \frac{32 M^{2a}}{n \ \sum_{x \in B_{n/4}} \tau_x}.
$$
\end{proof}

\subsection{Large dimension, anomalous behaviour}
The results proved in this part are in fact valid in any dimension and for any $\alpha > 0$, but they are sharp only in the regime given in the title, that is for $d \ge 2$ and $2 \alpha \le d$.
\begin{thm}
\label{relax}
\begin{enumerate}
\item
\label{relaxprob}
For any $\eps > 0$, there exists $M > 0$ such that for all $n$ large enough~:
$$
\P\left[\lambda_n \max_{B_{n-1}} \tau \le M\right] \ge 1 - \eps.
$$
\item
\label{relaxps}
For any $\eps > 0$ and almost every environment~:
$$
n^{d/\alpha-\eps} \lambda_n \xrightarrow[n \to \infty]{} 0.
$$
\end{enumerate}
\end{thm}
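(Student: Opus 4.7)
The plan for both parts is the variational bound $\lambda_n \le \cE(f,f)/(f,f)$ with a test function localised around a deep trap. For $x \in B_{n-1}$, set $f_x = \1_{\{x\} \cup N(x)}$, where $N(x)$ is the set of $2d$ nearest neighbours of $x$; then $f_x \in L^2(B_n)$, and because no two neighbours of $x$ in $\Z^d$ are themselves adjacent, the only edges on which $f_x$ varies are those from $N(x)$ to sites at $\ell^1$-distance $2$ from $x$. One computes $(f_x,f_x) \ge \tau_x$ and
$$\cE(f_x,f_x) = \sum_{u \sim x}\sum_{v \sim u,\,v \ne x} \tau_u^a \tau_v^a \ \le\ c_d\, Z_x^{2a},$$
where $Z_x := \max\{\tau_y : y \ne x,\ \|y-x\|_1 \le 2\}$ and $c_d$ depends only on $d$. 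The simpler choice $f = \1_{\{x\}}$ would introduce a harmful factor $\tau_x^a$ in $\cE$; enlarging the support to $\{x\} \cup N(x)$ kills that factor, at the price of needing to control $Z_x$. The master inequality is
$$\lambda_n\, \tau_x \ \le\ c_d\, Z_x^{2a}.$$

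For part (\ref{relaxprob}), I would apply this with $x = x^*$, a site in $B_{n-1}$ where $\tau$ attains its maximum, and show that $Z_{x^*}$ is tight in $n$. Decomposing on the location of the maximum,
$$\P[Z_{x^*} > K] \ \le\ \sum_{x \in B_{n-1}} \P[x = x^*,\ Z_x > K].$$
Conditioning on $\tau_x = t$, splitting $B_{n-1}\setminus\{x\}$ into the at most $c_d$ sites within distance $2$ of $x$ and the remaining ``far'' sites, and using $(1-F(t))^{c_d} - (1-F(K))^{c_d} \le c_d F(K)$ for $t \ge K$, each summand is at most
$$c_d F(K)\int_K^\infty (1-F(t))^{|B_{n-1}|-c_d-1} f(t)\,dt \ \le\ \frac{c_d F(K)}{|B_{n-1}|-c_d}.$$
Summing over $x$ gives $\P[Z_{x^*} > K] \le 2c_d F(K)$ for $n$ large, which can be made as small as wished by taking $K$ large enough; setting $M := c_d K^{2a}$ yields the claim.

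For part (\ref{relaxps}), fix $\eps > 0$; the goal is, almost surely for large $n$, to exhibit a single site $x \in B_{n-1}$ with $\tau_x > n^{d/\alpha - \eps/2}$ and $Z_x \le M$, where $M$ is fixed once and for all. I would work on a sub-lattice $\Lambda_n \subset B_{n-1}$ of points spaced at $\ell^1$-distance $> 4$, so that the sets $\{y : \|y-x\|_1 \le 2\}$ are pairwise disjoint across $x \in \Lambda_n$ and the ``good'' events $G_x = \{\tau_x > n^{d/\alpha - \eps/2},\ Z_x \le M\}$ are independent; also $|\Lambda_n| \ge c\, n^d$. Applying (\ref{ppF}) with an auxiliary $\eps'$ small enough, one gets $F(n^{d/\alpha - \eps/2}) \ge n^{-d + \alpha\eps/4}$ for $n$ large, whence $\P[G_x] \ge c' n^{-d + \alpha\eps/4}$, and by independence
$$\P[\text{no } G_x \text{ holds on } \Lambda_n] \ \le\ \exp\bigl(-c'' n^{\alpha\eps/4}\bigr),$$
which is summable; Borel--Cantelli produces the required $x$ almost surely. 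Plugging into the master inequality gives $\lambda_n \le c_d M^{2a}/n^{d/\alpha - \eps/2}$, so $n^{d/\alpha - \eps}\lambda_n \le c_d M^{2a}\, n^{-\eps/2} \to 0$.

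The main obstacle is the tightness estimate in part (\ref{relaxprob}): a naive union bound over pairs $(y, x)$ with $K < \tau_y \le \tau_x$ only yields $F(K)^2$ per pair, and summing over $|B_{n-1}|$ positions of $x$ leaves a factor that diverges with $n$. The refined argument above exploits the fact that conditioning on $x$ being the maximiser typically forces $\tau_x$ to be of order $h(n^d)$, so the neighbours of $x$ retain essentially their unconditional distribution, and an extra factor $1/|B_{n-1}|$ appears in the per-$x$ estimate that exactly cancels the number of summands.
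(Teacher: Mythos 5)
Your core step is exactly the paper's: the test function equal to $1$ on a site $x$ and its nearest neighbours yields $\lambda_n \le c_d\,(\text{local max})^{2a}/\tau_x$, which is the paper's inequality (\ref{supgene}) with $Z_x$ playing the role of $M_x$. Where you diverge is in how this is converted into the two statements, and both of your routes work. For part (\ref{relaxprob}) the paper argues by exchangeability: conditionally on the maximiser being at $z$ (and $z$ away from the origin's neighbourhood and the boundary), the remaining variables are exchangeable, so $M_{x_n}$ has the same law as $M_0$ up to an event of vanishing probability, and tightness of $M_0$ finishes the proof; you instead do the explicit union bound over the location of the maximiser with the $1/(|B_{n-1}|-c_d)$ factor coming from integrating $(1-F(t))^{|B_{n-1}|-c_d-1}$. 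The two computations encode the same heuristic, but the paper's version avoids any regularity of the law of $\tau_0$. For part (\ref{relaxps}) the paper maximises $\tau_x/(M_x)^{2a}$ over the whole box and bounds the no-success probability by a product over all of $B_{n-1}$, a step that silently ignores the overlap of the neighbourhoods; your restriction to a sub-lattice of $\ell^1$-spacing at least $5$, which makes the events $G_x$ genuinely independent at the cost of replacing $(2n-1)^d$ by $c\,n^d$, is a cleaner way to justify that step and loses nothing.

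Two small points in your part (\ref{relaxprob}) argument need patching. First, you integrate against a density $f(t)$, but Assumption 1' only requires $F\in RV_{-\alpha}$, so the law of $\tau_0$ may have atoms; the clean bound $\E\bigl[(1-F(\tau_0))^{N}\bigr]\le 1/(N+1)$ can then fail (though regular variation forces the relative jump sizes of $F$ to vanish, so a bound $C/N$ still holds), and one must also fix a tie-breaking rule for $x^*$. The paper's exchangeability argument sidesteps all of this. Second, $Z_{x^*}$ involves sites at distance $\le 2$ from $x^*$ that may lie outside $B_{n-1}$; these are independent of the event $\{x=x^*\}$ and contribute an extra term of order $F(K)\sum_x\P[x=x^*]\le c_d F(K)$, so the conclusion stands, but the case should be mentioned. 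Neither issue is a gap in the idea, only in the bookkeeping.
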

\begin{proof}
Let $K$ be the set of first and second neighbours of $0$, namely $K = \{x \in \Z^d : 1 \le \|x\| \le 2 \}$, and $c$ the number of edges from a point of $\{x : \|x\| = 1 \}$ to a point of $\{x : \|x\| = 2 \}$.  Write $M_x = \max_{x+K} \tau$. If we choose the function that takes value $1$ on site $x \in B_{n-1}$ and its neighbours, and $0$ elsewhere, namely~:
$$
f(z) = 
\left|
\begin{array}{ll}
1 & \text{if } \|z-x\| \le 1 ,\\
0 & \text{otherwise},
\end{array}
\right.
$$
then we see that for any $x \in B_{n-1}$~:
\begin{equation}
\label{supgene}
\lambda_n \le \frac{c(M_x)^{2a}}{\tau_x}.
\end{equation}
Let $x_n \in B_{n-1}$ be such that $\tau_{x_n} = \max_{B_{n-1}} \tau$. We have~:
$$
\lambda_n \le \frac{c(M_{x_n})^{2a}}{\max_{B_{n-1}} \tau}.
$$
So we get~:
$$
\P\left[\lambda_n \max_{B_{n-1}} \tau \ge M\right] \le \P\left[c (M_{x_n})^{2a} \ge M\right].
$$
Now recall that $M_{x_n}$ is the maximum over all neighbours and second neighbours of $x_n$, so it should look like taking the maximum over all neighbours and second neighbours of, say, $0$. More precisely, conditionally on $\max_{B_{n-1}} \tau = \tau_z$ for some fixed $z$, the law of $(\tau_x)_{x \in B_{n-1} \setminus \{z\}}$ is invariant under permutation. Therefore, provided $z \in B_{n-2} \setminus K$ and conditionally on $\max_{B_{n-1}} \tau = \tau_z$, the random variables $M_z$ and $M_0$ have the same law. Summing over all $z \in B_{n-2} \setminus K$, we get that  conditionally on the event $E_n$ that $x_n \in B_{n-2} \setminus K$, the random variables $M_0$ and $M_{x_n}$ have the same law. We obtain~:
$$
\P\left[c (M_{x_n})^{2a} \ge M\right] \le \P\left[c (M_{0})^{2a} \ge M\right] + \P\left[E_n^c\right].
$$
The law of $x_n$ being uniform in $B_{n-1}$, we have that $\P\left[E_n^c\right]$ goes to $0$ when $n$ goes to infinity. First part of the theorem comes choosing $M$ large enough.

We now turn to the second assertion of the proposition. Defining~:
$$
\ov{M}_n = \max_{x \in B_{n-1}} \frac{\tau_x}{(M_x)^{2a}},
$$
we will show that for any $\eps > 0$~:
\begin{equation}
\label{infty}
\frac{\ov{M}_n}{n^{d/\alpha-\eps}}   \xrightarrow[n \to \infty]{\text{a.s.}} + \infty,
\end{equation}
which will prove the result via equation (\ref{supgene}). There exists $k > 0$ such that $\P[(M_x)^{2a} > k] < 1/2$. Thus (note that $M_x$ and $\tau_x$ are independent)~:
$$
\P\left[\frac{\tau_x}{(M_x)^{2a}} \ge y\right] \ge \frac{\P[\tau_x \ge ky]}{2}  = \frac{F(ky)}{2}.
$$
Hence, for all $K > 0$~:
$$
\P[\ov{M}_n \le n^{d/\alpha-\eps} K] \le \left(1-\frac{F(kKn^{d/\alpha-\eps})}{2}\right)^{(2n-1)^d},
$$
and recalling that, as a consequence of assumption 1' (see (\ref{ppF})), for all $\beta < \alpha$, $F(y) \le y^{-\beta}$ for all $y$ large enough, one can see that the term on the right-hand side of the former equality is the general term of a convergent series, and thus apply the Borel-Cantelli lemma.
\end{proof}

\subsection{Regular behaviour}
In what follows our assumption will be that $\E[\tau_0^a]$ is finite. In particular, all results will be valid under the condition that $\E[\tau_0]$ is finite (or if $a=0$).

We write $(e_i)_{1 \le i \le d}$ for the canonical base of $\R^d$.
\begin{prop}
\label{lln}
Let $f : [-1,1]^d \to \R$ be a continuous function. If $\E[\tau_0^a]$ is finite, then for all $i \in \{1,\ldots, d\}$~:
\begin{equation}
\label{llneq}
\frac{1}{(2n+1)^d} \sum_{x \in B_n} \tau_x^a \tau_{x+e_i}^a \ f(x/n) \xrightarrow[n \to \infty]{\text{a.s.}} \E[\tau_0^a]^2 \int_{[-1,1]^d} f(x) \d x.
\end{equation}
\end{prop}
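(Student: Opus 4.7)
The plan is to decouple the 1-dependent array $Y_x := \tau_x^a \tau_{x+e_i}^a$ by splitting the lattice according to the parity of $x_i$, then reduce the continuous weight $f$ to a step function via uniform continuity, and finally invoke the multidimensional strong law of large numbers in each piece.

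First, I would partition $B_n = B_n^{(0)} \sqcup B_n^{(1)}$ according to whether $x_i$ is even or odd. Within a single parity class, the pairs $\{x, x+e_i\}$ are pairwise disjoint, so the family $(Y_x)_{x \in B_n^{(k)}}$ is independent; each $Y_x$ has mean $\E[\tau_0^a \tau_{e_i}^a] = \E[\tau_0^a]^2 < \infty$ by independence of $\tau_0$ and $\tau_{e_i}$. This decoupling is the essential step, since the array $(Y_x)_{x \in \Z^d}$ itself is only 1-dependent along direction $e_i$ and the classical SLLN does not apply to it directly.

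Next, fix $\eps > 0$; by uniform continuity of $f$ on the compact set $[-1,1]^d$, choose a step function $\td f$ that is constant on each cube of a sufficiently fine grid partitioning $[-1,1]^d$ and satisfies $\|f-\td f\|_\infty < \eps$. By linearity it suffices to treat the case $f = \1_Q$ for a cube $Q$ of the grid. For each parity class $k$, reindex $(Y_x)_{x \in B_n^{(k)} \cap nQ}$ as an i.i.d. family on a box of $\Z^d$ (after halving the $i$-th coordinate) and apply the multidimensional strong law of large numbers, which needs only $\E Y_0 < \infty$ (Birkhoff's pointwise ergodic theorem applied to the $\Z^d$-shift on the product space, or Smythe's multiparameter SLLN), to get
$$
\frac{1}{(2n+1)^d}\sum_{x \in B_n^{(k)} \cap nQ} Y_x \xrightarrow[n\to\infty]{\text{a.s.}} c_k\,\E[\tau_0^a]^2\,|Q|
$$
for the obvious volumetric constant $c_k$. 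Summing over $k$ and over all cubes $Q$ of the partition yields the desired limit for $\td f$.

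Finally, I would control the approximation error:
$$
\left|\frac{1}{(2n+1)^d}\sum_{x \in B_n}Y_x\bigl(f(x/n)-\td f(x/n)\bigr)\right|
\le \eps \cdot \frac{1}{(2n+1)^d}\sum_{x \in B_n} Y_x,
$$
and applying the case $f \equiv 1$ (already established in the previous step) shows that the right-hand side converges almost surely to a constant multiple of $\eps\,\E[\tau_0^a]^2$. Combined with $|\int_{[-1,1]^d}(f-\td f)| = O(\eps)$, letting $\eps \to 0$ through a countable sequence concludes the argument on a set of full probability. The main obstacle I anticipate is ensuring the SLLN step goes through with only first-moment integrability rather than $L^2$ control (so that no moment argument via covariances is available); the parity decoupling above is what allows us to rely on the classical i.i.d. SLLN and thereby avoid this difficulty.
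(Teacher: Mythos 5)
Your proof is correct and is essentially the paper's own argument: split $B_n$ into the two parity classes along direction $e_i$ so that the variables $\tau_x^a\tau_{x+e_i}^a$ are independent within each class, apply the strong law of large numbers for piecewise constant weights, and pass to continuous $f$ by uniform approximation (your error control via the case $f\equiv 1$ plays the same role as the paper's sandwich between piecewise constant approximants from above and below). One minor remark: if you compute your ``volumetric constants'' $c_k$ explicitly you will find that, with the normalization $(2n+1)^d$, the a.s.\ limit is $2^{-d}\,\E[\tau_0^a]^2\int_{[-1,1]^d}f(x)\,\d x$ (check $d=1$, $a=0$, $f\equiv1$), so the constant in the displayed statement is off by a factor $2^d$; this imprecision is in the statement itself, is shared by the paper's proof sketch, and is harmless for the subsequent use in Theorem \ref{taua}, where only the order of magnitude matters.
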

\begin{proof}
If $f$ is piecewise constant, then the limit (\ref{llneq}) is proved by separating the sum over $B_n$ into two parts $B_n'$ and $B_n''$ so that $(\tau_x^a \tau_{x+e_i}^a)_{x \in B_n'}$ and $(\tau_x^a \tau_{x+e_i}^a)_{x \in B_n''}$ are two families of independent random variables, and then applying the law of large numbers. For a continuous $f$, one can approximate uniformly $f$ by piecewise constant functions from above and below, and the result follows.
\end{proof}
For all $f : [-1,1]^d \to \R$ and all integer $n$, we define the function $f_n : \Z^d \to \R$ by $f_n(x) = f(x/n)$ if $x \in B_n$, and $f_n(x) = 0$ otherwise. Note that $f_n \in L^2(B_n)$.
\begin{prop}
\label{lln+}
Let $f : [-1,1]^d \to \R$ be a twice continuously differentiable function that takes value $0$ on the boundary of $[-1,1]^d$. If $\E[\tau_0^a]$ is finite, then~:
$$
\frac{n^2}{(2n)^d} \cE(f_n, f_n) \xrightarrow[n \to \infty]{\text{a.s.}} \E[\tau_0^a]^2 \int_{[-1,1]^d} \|\nabla f(x)\|_2^2 \d x.
$$
\end{prop}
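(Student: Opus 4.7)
The plan is to expand the squared discrete gradient to first order by Taylor and then invoke Proposition \ref{lln} direction by direction. Writing the Dirichlet form as a sum over oriented edges,
$$\cE(f_n,f_n)=\sum_{i=1}^d\sum_{x\in\Z^d}\tau_x^a\tau_{x+e_i}^a\bigl(f_n(x+e_i)-f_n(x)\bigr)^2,$$
I would first observe that every edge straddling $\partial B_n$ contributes nothing. Indeed, if $x\in B_n$ with $x_i=n$, then $x+e_i\notin B_n$ so $f_n(x+e_i)=0$; and since $(x/n)_i=1$ lies on $\partial[-1,1]^d$ where $f$ vanishes, also $f_n(x)=0$. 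The mirror case $x_i=-n$ is identical. Thus the sum effectively ranges over $x$ with both endpoints in $B_n$, on which $f(x/n)$ and $f((x+e_i)/n)$ are well defined through $f$ itself.

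Next, since $f\in C^2([-1,1]^d)$ with $\|\partial_i\partial_j f\|_\infty<\infty$, Taylor's formula gives a constant $C=C(f)$ such that for all such edges
$$\Bigl|f_n(x+e_i)-f_n(x)-\tfrac{1}{n}\partial_i f(x/n)\Bigr|\le\frac{C}{n^2},\qquad\Bigl|(f_n(x+e_i)-f_n(x))^2-\tfrac{1}{n^2}(\partial_i f)^2(x/n)\Bigr|\le\frac{C'}{n^3}.$$
Multiplying this pointwise bound by $\tau_x^a\tau_{x+e_i}^a$ and summing over $x\in B_n$, the total error is bounded by $\tfrac{C'}{n^3}\sum_{x\in B_n}\tau_x^a\tau_{x+e_i}^a$, and Proposition \ref{lln} applied to the constant function $1$ shows this last sum is a.s.\ of order $n^d$. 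After multiplication by $n^2/(2n)^d$ the Taylor remainder therefore contributes $O(1/n)\to 0$ a.s.

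It then remains to identify the limit of
$$\sum_{i=1}^d\frac{1}{(2n)^d}\sum_{x\in B_n,\,x+e_i\in B_n}\tau_x^a\tau_{x+e_i}^a\,(\partial_i f)^2(x/n).$$
Since $f\in C^2$, each $(\partial_i f)^2$ is continuous on $[-1,1]^d$, so Proposition \ref{lln} applies and yields, for every $i$,
$$\frac{1}{(2n+1)^d}\sum_{x\in B_n}\tau_x^a\tau_{x+e_i}^a\,(\partial_i f)^2(x/n)\xrightarrow[n\to\infty]{\mathrm{a.s.}}\E[\tau_0^a]^2\int_{[-1,1]^d}(\partial_i f)^2(x)\,\d x.$$
Replacing $(2n+1)^d$ by $(2n)^d$ costs only a factor $1+O(1/n)$, and the discarded boundary terms ($x\in B_n$ with $x_i=n$) form a sum of $(2n+1)^{d-1}$ bounded-coefficient products $\tau_x^a\tau_{x+e_i}^a$, hence of a.s.\ order $n^{d-1}$ by the same split-into-independent-blocks LLN underlying Proposition \ref{lln}; dividing by $(2n)^d$ this also contributes $O(1/n)$. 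Summing over $i$ produces $\E[\tau_0^a]^2\int_{[-1,1]^d}\|\nabla f\|_2^2\,\d x$, as required.

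The only real points to watch are bookkeeping issues: the edges that cross $\partial B_n$ (handled cleanly by $f|_{\partial[-1,1]^d}=0$), the boundary layer $\{x_i=n\}\subset B_n$ (handled by its $O(n^{d-1})$ cardinality and the LLN), and the uniform Taylor remainder (controlled by compactness of $[-1,1]^d$ and $f\in C^2$). Once these are dispatched, the statement reduces to $d$ applications of Proposition \ref{lln}.
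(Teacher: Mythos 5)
Your proposal follows the same route as the paper: Taylor-expand the discrete gradient, control the remainder by summing $\tau_x^a\tau_{x+e_i}^a$ over $B_n$ (Proposition \ref{lln} with the constant function), discard the edges crossing $\partial B_n$ because $f$ vanishes on $\partial[-1,1]^d$, and identify the main term by applying Proposition \ref{lln} to the continuous functions $(\partial_i f)^2$. All of this is correct, and your $O(n^{-3})$ bound on the squared-increment error is even slightly sharper than needed.

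The one step whose justification does not hold as written is the boundary layer. You claim that $S_n:=\sum_{x\in B_n,\,x_i=n}\tau_x^a\tau_{x+e_i}^a$ is a.s.\ of order $n^{d-1}$ ``by the same LLN''. This is an almost sure statement along a \emph{moving} family of layers (a different set of sites for each $n$), and a first-moment hypothesis on $\tau_0^a$ does not give it: in $d=1$ the layer is the single term $\tau_n^a\tau_{n+1}^a$, which is a.s.\ unbounded in $n$ under Assumption 1'; and for $d\ge 2$, if $\tau_0^a$ has tail index in $(1,2)$ (e.g.\ $a=1$, $1<\alpha<2$), a second Borel--Cantelli argument over the essentially disjoint layers shows $S_n> Cn^{d-1}$ infinitely often for every $C$. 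Fortunately you only need $S_n=o(n^d)$, and this follows from Proposition \ref{lln} itself: with $\Sigma_n:=\sum_{x\in B_n}\tau_x^a\tau_{x+e_i}^a$ one has $S_n\le \Sigma_n-\Sigma_{n-1}$ (the layer lies in $B_n\setminus B_{n-1}$ and all terms are nonnegative), while Proposition \ref{lln} applied with $f\equiv 1$ gives $\Sigma_n-\Sigma_{n-1}=\E[\tau_0^a]^2\bigl((2n+1)^d-(2n-1)^d\bigr)+o(n^d)=o(n^d)$ a.s.; multiplied by the bounded factor $\|\partial_i f\|_\infty^2$ and divided by $(2n)^d$ this indeed vanishes. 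With that one-line repair your argument is complete; it is worth noting that the paper's own proof passes over this boundary-layer discrepancy silently, so your attempt to address it explicitly is welcome even though the particular bound you invoked is not available under the stated hypotheses.
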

Recall the following equality~:
$$
\cE(f_n, f_n) = \sum_{i = 1}^d \sum_{x \in B_n} \tau_x^a \tau_{x+e_i}^a \left(f\left(\frac{x}{n}\right) - f\left(\frac{x+e_i}{n}\right)\right)^2.
$$
As we assumed $f$ to be twice continuously differentiable, it comes that for all $\eps > 0$ and $n$ large enough~:
$$
\forall x \in B_n  : x+e_i \in B_n \Rightarrow \left|\left(f\left(\frac{x}{n}\right) - f\left(\frac{x+e_i}{n}\right)\right)^2 - \frac{1}{n^2} {\frac{\partial f}{\partial x_i}\left(\frac{x}{n}\right)}^2 \right| \le \frac{\eps}{n^2},
$$
and note that if $x \in B_n$ and $x+e_i \notin B_n$, then $f(x/n) = f((x+e_i)/n) = 0$, so this case does not contribute to the sum. The result follows using the previous proposition.
\begin{thm}
\label{taua}
If $\E[\tau_0^a]$ is finite, then there exists $C$ such that almost surely, for all $n$ large enough~:
$$
\lambda_n \le \frac{C}{n^2} \frac{n^d}{\sum_{x \in B_{n/2}} \tau_x}.
$$
\end{thm}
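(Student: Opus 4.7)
The plan is a direct application of the variational characterisation (\ref{variational}), using a test function constructed from a well-chosen smooth function on $[-1,1]^d$, and leveraging the convergence result proved in Proposition~\ref{lln+}.

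First I would fix a twice continuously differentiable function $\varphi : [-1,1]^d \to \R$ vanishing on $\partial([-1,1]^d)$ and such that $\varphi \ge c$ on $[-1/2,1/2]^d$ for some constant $c > 0$ (for instance $\varphi(x_1,\ldots,x_d) = \prod_{i=1}^d (1-x_i^2)$ would do, with $c = (3/4)^d$). Then I would form the discretised function $\varphi_n$ as in the statement of Proposition~\ref{lln+}, which lies in $L^2(B_n)$ and is thus an admissible test function.

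Next, applying Proposition~\ref{lln+} to $\varphi$, almost surely for $n$ large enough,
\[
\cE(\varphi_n, \varphi_n) \le \frac{K \, n^d}{n^2}
\]
for some deterministic constant $K$ depending only on $\varphi$ and on $\E[\tau_0^a]$. On the other hand, since $\varphi \ge c$ on $[-1/2,1/2]^d$ and $\varphi_n(x) = \varphi(x/n)$, we have $\varphi_n(x)^2 \ge c^2$ for every $x \in B_{n/2}$, so
\[
(\varphi_n, \varphi_n) = \sum_{x \in B_n} \varphi_n(x)^2 \tau_x \ge c^2 \sum_{x \in B_{n/2}} \tau_x.
\]
Plugging these two estimates into the variational formula (\ref{variational}) yields, almost surely for all $n$ large enough,
\[
\lambda_n \le \frac{\cE(\varphi_n, \varphi_n)}{(\varphi_n, \varphi_n)} \le \frac{K}{c^2} \cdot \frac{1}{n^2} \cdot \frac{n^d}{\sum_{x \in B_{n/2}} \tau_x},
\]
which is the desired bound with $C = K/c^2$.

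There is essentially no obstacle here: all the work has already been done in Proposition~\ref{lln+}. The only point requiring any care is to choose $\varphi$ so that it is simultaneously $C^2$ with zero boundary values (to apply Proposition~\ref{lln+}) and bounded below by a positive constant on an interior region of positive volume (to control $(\varphi_n,\varphi_n)$ from below by a partial sum of the $\tau_x$). The explicit polynomial $\prod_i(1-x_i^2)$ has both properties, so this is a non-issue.
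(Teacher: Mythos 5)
Your proof is correct and follows essentially the same route as the paper: the paper also plugs a product test function (there $\prod_i \sin(\pi x_i/2)$ rather than your $\prod_i(1-x_i^2)$) into the variational formula (\ref{variational}), bounds the Dirichlet form via Proposition~\ref{lln+}, and bounds $(f_n,f_n)$ from below by the sum of $\tau_x$ over $B_{n/2}$ using the positive lower bound of the test function on $[-1/2,1/2]^d$. No gaps; your choice of test function serves the same purpose.
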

\begin{proof}
Taking $f(x) = \prod_{i=1}^d \sin\left(\frac{\pi x_i}{2}\right)$ in Proposition \ref{lln+}, we get that for almost every environment~:
$$
\cE(f_n, f_n) \sim \frac{d \pi^2}{4} \frac{(2n)^d}{n^2} \E[\tau_0^a]^2  \qquad (n \to +\infty).
$$
On the other hand, if $x \in B_{n/2}$, then $f(x) \ge 2^{-d/2}$, thus~:
$$
(f_n,f_n) \ge 2^{-d/2} \sum_{x \in B_{n/2}} \tau_x,
$$
therefore the proposition holds for any $C > 2^{3d/2-2} d \pi^2 \E[\tau_0^a]^2$.
\end{proof}
%
%
%
%
%
%
\section{The distinguished path method}
\label{s:cs}
\setcounter{equation}{0}
We present here a more direct method to get a lower bound on $\lambda_n$ (close to the one presented e.g. in \cite[Theorem 3.2.3]{SC}, but adapted to treat the case of Dirichlet boundary condition), and show that it does not provide a sharp estimate when $d \ge 2$. Note that in dimension one, \cite[Section 3.7]{chen} proves that this technique is always sharp, and one can verify that it gives indeed the expected lower bound. This method also proved efficient in larger dimension in \cite[Section 3]{fm} in the context of random walks among random conductances.

For all $x \in B_n$, we give ourselves a path $\gamma_n(x)$ from some point of $\partial B_n$ to $x$ (that apart from the starting point, visits only points in $B_n$). Let $\gamma_n(x) = (x^0, \ldots, x^l)$. For an edge $e$, we note $e \in \gamma_n(x)$ if $e = (x^{i},x^{i+1})$ for some $i$, and in this case, we write $\d f(e) = f(x^{i+1})-f(x^i)$, and $\mathfrak{Q}(e) = \tau_{x^i}^a \tau_{x^{i+1}}^a$. Let $E_n$ be the set of edges that go from a point of $B_n$ to a point of $B_n \cup \partial B_n$. We give ourselves a weight function $W_n : E_n \to (0,+\infty)$. We define the $W_n$-length of a path $\gamma$ as~:
$$
l_n(\gamma) = \sum_{e \in \gamma} \frac{1}{W_n(e)}.
$$
Note that, as we assumed that $\tau \ge 1$, we have that $\mathfrak{Q}(e) \ge 1$ (and there is equality when $a=0$). Using Cauchy-Schwarz inequality, we get~:
\begin{eqnarray*}
f(x)^2 & = & \left( \sum_{e \in \gamma_n(x)} \d f(e) \right)^2 \\
& \le & \sum_{e \in \gamma_n(x)} \frac{1}{W_n(e)\mathfrak{Q}(e)} \sum_{e \in \gamma_n(x)} \d f(e)^2 W_n(e) \mathfrak{Q}(e) \\
& \le & l_n(\gamma_n(x)) \sum_{e \in \gamma_n(x)} \d f(e)^2 W_n(e) \mathfrak{Q}(e)
\end{eqnarray*}
\begin{eqnarray*}
\sum_{x \in B_n} f(x)^2 \tau_x & \le & \sum_{x \in B_n} l_n(\gamma_n(x)) \tau_x \sum_{e \in \gamma_n(x)} \d f(e)^2 W_n(e) \mathfrak{Q}(e) \\
& \le & \sum_{e \in E_n} \d f(e)^2 \mathfrak{Q}(e) W_n(e) \sum_{x : e \in \gamma_n(x)} l_n(\gamma_n(x)) \tau_x.
\end{eqnarray*}
Note that 
$$
\cE(f,f) = \sum_{e \in E_n} \d f(e)^2 \mathfrak{Q}(e),
$$
so letting 
$$
\mathcal{M}_n := \max_{e \in E_n} W_n(e) \sum_{x : e \in \gamma_n(x)} l_n(\gamma_n(x)) \tau_x,
$$
we obtain the following lower bound on $\lambda_n$ (similar to \cite[Theorem 3.2.3]{SC})~:
$$
\lambda_n \ge \frac{1}{\mathcal{M}_n}.
$$
Let us see that, however $W_n$ and $\gamma_n(x)$ are chosen, it cannot lead to a sharp bound if $d \ge 2$ and $\alpha < d$. Let $z \in B_{n/2}$ be such that $\tau_z$ is maximal. The site $z$ is such that $\tau_z \simeq n^{d/\alpha}$ and $|\gamma_n(z)| \ge n/2$. Now choose $e \in \gamma_n(z)$ so that $W_n(e)$ is maximal. We have~:
$$
\mathcal{M}_n \ge \sum_{e' \in \gamma_n(z)} \frac{W_n(e)}{W_n(e')} \tau_z \ge |\gamma_n(z)| \tau_z \gtrsim n^{1+d/\alpha},
$$
where we would have hoped to find $n^{\max(2,d/\alpha)}$. So this method cannot give the appropriate exponent if $\alpha < d$.

Still, note that if one chooses $W_n$ constant equal to $1$, and the shortest paths for $(\gamma_n(x))_{x \in B_n}$, one can show using results of \cite{BK} that $\mathcal{M}_n$ is indeed of order $n^{\max(2,1+d/\alpha)}$, which gives an alternative proof of a lower bound for the principal eigenvalue when $\alpha \ge d$.

\noindent \textbf{Acknowledgments.} The author would like to thank his Ph.D. advisors, Pierre Mathieu and Alejandro Ram\'irez, for many insightful discussions about this work as well as detailed comments on earlier drafts, and G\'erard Ben Arous for suggesting this problem.

\end{document}